\newtheorem{theorem}{Theorem}[section]
\newtheorem{remark}{Remark}[section]
\newtheorem{lemma}[theorem]{Lemma}
\newtheorem{proposition}[theorem]{Proposition}
\newtheorem{define}{Definition}[section]
\begin{document}
\title[SIRS-B epidemic PDE model]{Global stability and uniform persistence of the reaction-convection-diffusion cholera epidemic model}
\author{Kazuo Yamazaki}
\address{Washington State University, Department of Mathematics and Statistics, Pullman, WA 99164-3113, U.S.A. (kyamazaki@math.wsu.edu)}
\author{Xueying Wang}
\address{Washington State University, Department of Mathematics and Statistics, Pullman, WA 99164-3113, U.S.A. (xueying@math.wsu.edu)}  
\date{}
\maketitle

\begin{abstract}
We study the global stability issue of the reaction-convection-diffusion cholera epidemic PDE model and show that the basic reproduction number serves as a threshold parameter that predicts whether cholera will persist or become globally extinct. Specifically, when the basic reproduction number is beneath one, we show that the disease-free-equilibrium is globally attractive. On the other hand, when the basic reproduction number exceeds one, if the infectious hosts or the concentration of bacteria in the contaminated water are not initially identically zero, we prove the uniform persistence result and that there exists at least one positive steady state. 

\vspace{5mm}

\textbf{Keywords: basic reproduction number; cholera dynamics; persistence; principal eigenvalues; stability.}
\end{abstract}
\footnote{2010MSC: 35B65, 35K57, 47H20, 92D25, 92D30}

\section{Introduction}
Cholera is an ancient intestinal disease for humans. It has a renowned place in epidemiology with John Snow's famous investigations of London cholera in 1850's which established the link between contaminated water and cholera outbreak. Cholera is caused by bacterium {\it vibrio cholerae}. The disease transmission consists of two routes:  indirect environment-to-human  (through ingesting the contaminated water) and direct person-to-person transmission routes.  Even though cholera has been an object of intense study for over a hundred years, it remains to be a major public health concern in developing world;  the disease has resulted in a number of outbreaks including the recent devastating outbreaks in Zimbabwe and Haiti, and renders more than 1.4 million cases of infection and 28,000 deaths worldwide every year \cite{WHO2}. 

It is well known that the transmission and spread of infectious diseases are complicated by spatial variation that involves distinctions in ecological and geographical environments, population sizes, socio-economic and demographic structures, human activity levels, contact and mixing patterns, and many other factors.  In particular, for cholera, spatial movements of humans and water can play an important role in shaping complex disease dynamics and patterns \cite{DB11,PBFHPGMR11}. There have been many studies published in recent years on cholera modeling and analysis (see, e.g., \cite{BCGR09,CP2,C14,HMS06,MLWGSM11,NSGFL10,SD11,TW11,TE10,WL12,WM11, WPW16,WW15, YW16}).  However, only a few mathematical models 
among this large body of cholera models have considered human and water movement so far. Specifically, Bertuzzo et al. incorporated both water and human movement and formulated a simple PDE model \cite{BCGR09, RBMRBGCMVR12} and a patch model \cite{BMRGCBRR11}, in which only considered indirect transmission route. Chao et al. \cite{CHL11}  proposed a stochastic model to study vaccination strategies and accessed its impact on spatial cholera outbreak in Haiti by using the model and data, for which both direct and indirect transmission were included. Tien, van den Driessche and their collaborators used network ODE models  incorporating both water and human movement between geographic regions, and their results establish the connection in disease threshold between network and regions \cite{ESTD13,TSED15}.  Wang et al. \cite{WPW16} developed a generalized PDE model to study the spatial spread of cholera dynamics along a theoretical river, employing general incidence functions for direct and indirect transmission and intrinsic bacterial growth and incorporating both human/pathogen diffusion and bacterial convection. 

In the present paper, we shall pay our attention to a  reaction-diffusion-convection cholera model, which employs a most general formulation incorporating all different factors. This PDE model was first proposed in \cite{WPW16} and received investigations \cite{WPW16, YW16}. Let us now describe this model explicitly in the following section. 

\section{Statement of Main Results}
We study the following SIRS-B epidemic PDE model for cholera dynamics with $x \in [0,1], t > 0$: 
\begin{subequations}
\label{Eqn:PDE-model}	
\begin{align}\label{1}
& \frac{\partial S}{\partial t} = D_{1} \frac{\partial^{2} S}{\partial x^{2}} +b - \beta_{1}SI - \beta_{2}S\frac{B}{B+K}   -d S+\sigma R,\\
&\frac{\partial I}{\partial t} = D_{2} \frac{\partial^{2} I}{\partial x^{2}} +  \beta_{1}SI + \beta_{2}S\frac{B}{B+K} - I(d + \gamma),\\
&\frac{\partial R}{\partial t} = D_{3} \frac{\partial^{2} R}{\partial x^{2}}  + \gamma I- R(d + \sigma),\\
&\frac{\partial B}{\partial t} = D_{4} \frac{\partial^{2} B}{\partial x^{2}} - U\frac{\partial B}{\partial x} + \xi I + gB\left(1-\frac{B}{K_{B}}\right)-\delta B,
\end{align}
\end{subequations}
(cf. \cite{WPW16}) subjected to the following initial and Neumann and Robin boundary conditions respectively:  
\begin{equation}\label{2}
S(x,0) = \phi_{1}(x), \hspace{3mm} 
I(x,0) = \phi_{2}(x), \hspace{3mm} 
R(x,0) = \phi_{3}(x), \hspace{3mm} 
B(x,0) = \phi_{4}(x), \hspace{3mm} 
\end{equation}
where each $\phi_{i} (i = 1, 2, 3, 4)$ is assumed to be nonnegative and continuous in space $x$, and 
\begin{subequations}\label{3}
\begin{align}
& \frac{\partial Z}{\partial x} (x, t)\Big|_{x = 0, 1} = 0,\hspace{2cm} Z=S, I, R,\\
& D_{4}\frac{\partial B}{\partial x} (x, t) - UB(x,t)\rvert_{x = 0} = \frac{\partial B}{\partial x}(x,t) \rvert_{x = 1} = 0.
\end{align}
\end{subequations}
Here $S=S(x,t), I=I(x,t), $ and $R=R(x,t)$ measure the number of susceptible, infectious, and recovered human hosts at location $x$ and time $t$, respectively. $B=B(x,t)$ denotes the concentration of the bacteria (vibrios) in the water environment. 
The definition of model parameters is provided in Table \ref{tab1}.

\begin{table}[h]
\caption{Definition of parameters in model \eqref{Eqn:PDE-model}}. \label{tab1}
\vspace{1mm}
\begin{tabular}{ ll}
\hline
\hline
Parameter  & Definition\\
\hline
$b$& Recruitment rate of susceptible hosts \\
$d$ & Natural death rate of human hosts \\
$\gamma$ &  Recovery rate of infectious hosts \\
$\sigma$& Rate of host immunity loss \\
$\delta$ & Natural death rate of bacteria\\
$\xi$ & Shedding rate of bacteria by infectious hosts\\
$\beta_{1}$ & Direct transmission parameter\\
$\beta_{2}$ & Indirect transmission parameter\\
$K$ & Half saturation rate of bacteria\\
$U$ & Bacterial convection coefficient\\
$K_{B}$ & Maximal carrying capacity of bacteria in the environment\\
\hline
\hline
\end{tabular}
\end{table}

We assume all of these parameters to be positive. Hereafter let us write $\partial_{t}, \partial_{x}, \partial_{xx}^{2}$ for $\frac{\partial}{\partial t}, \frac{\partial}{\partial x}, \frac{\partial^{2}}{\partial {x}^{2}}$, respectively.

To state our results clearly, let us denote the solution 
\begin{equation}\label{4}
u = (u_{1}, u_{2}, u_{3}, u_{4}) \triangleq (S, I, R, B) \in \mathbb{R}^{4}, \hspace{5mm} \phi \triangleq (\phi_{1}, \phi_{2}, \phi_{3}, \phi_{4}).
\end{equation}
We also denote the Lebesgue spaces $L^{p}$ with their norms by $\lVert \cdot\rVert_{L^{p}},\,p \in [1, \infty]$. Finally, we denote  
\begin{equation}\label{5}
X \triangleq C([0,1], \mathbb{R}^{4}) = \prod_{i=1}^{4} X_{i}, \hspace{5mm} X_{i} \triangleq C([0,1], \mathbb{R}),
\end{equation}
the space of $\mathbb{R}^{4}$-valued functions continuous in $x \in [0,1]$ with the usual sup norm 
\begin{equation}\label{6}
\lVert u\rVert_{C([0,1])} \triangleq \lVert S \rVert_{C([0,1])} + \lVert I \rVert_{C([0,1])} + \lVert R \rVert_{C([0,1])} + \lVert B \rVert_{C([0,1])}. 
\end{equation}
We define analogously 
\begin{equation*}
X^{+} \triangleq C([0,1], \mathbb{R}_{+}^{4}) = \prod_{i=1}^{4} X_{i}^{+}, \hspace{3mm} X_{i}^{+} \triangleq \{f \in C([0,1], \mathbb{R}): f \geq 0\}. 
\end{equation*}
	
Understanding the global dynamical behavior of cholera modeling problems is crucial in order to suggest effective measures to control the growth of the disease. To the best of our knowledge, the existing literature has only studied local dynamics of solutions of this general PDE model. The focus of the present work is global disease threshold dynamics, which will be established in terms of the basic reproduction number $R_0$ \cite{HWZ13, T09,WZ11}.  To that end, we conduct a rigorous investigation on the disease using the model, and analyze both model parameters and the system dynamics for a better understanding of disease mechanisms. Particularly, we perform a careful analysis on the global threshold dynamics of the disease. 

In review of previous results, firstly the authors in \cite{WW15} defined $\mathcal{R}_{0}^{ODE}$ for the SIRS-B ODE model, which can be extended to the SIRS-B PDE model as follows: denoting   
\begin{equation}\label{7}
\Theta_{1} \triangleq 
\begin{pmatrix}
m^{\ast} \beta_{1} & m^{\ast} \frac{\beta_{2}}{K}\\
\xi & g 
\end{pmatrix}, 
\hspace{5mm} 
\Theta_{2} \triangleq 
\begin{pmatrix}
D_{2} \partial_{xx}^{2} - (d+\gamma) & 0 \\
0 & D_{4} \partial_{xx}^{2} - U \partial_{x} - \delta 
\end{pmatrix},
\end{equation}
where $m^{\ast} \triangleq \frac{b}{d}$, we have $\mathcal{R}_{0}^{PDE} \triangleq r(-\Theta_{1} \Theta_{2}^{-1})$, the spectral radius of $-\Theta_{1}\Theta_{2}^{-1}$, for which $\mathcal{R}_{0}^{ODE}$ is same except that the operators $\Theta_{1}, \Theta_{2}$ in (\ref{7}) would have no diffusive operators $\partial_{xx}^{2}$. Moreover, the authors in \cite{WW15} proved that when $\mathcal{R}_{0}^{ODE} \leq 1$, the model has the disease-free-equilibrium (DFE) $(S, I, R, B) = (m^{\ast}, 0, 0, 0)$ which is globally asymptotically stable (see Theorem 2.1 of \cite{WW15}). On the other hand, when $\mathcal{R}_{0}^{ODE} > 1$, it was proven that this ODE model has two equilibriums, namely the DFE which is unstable and endemic equilibrium which is globally asymptotically stable (see Theorem 2.1 \cite{WW15}). For the SIRS-B PDE model with diffusion, the authors in \cite{YW16} used spectral analysis tools from \cite{T09} to show that when $\mathcal{R}_{0}^{PDE} < 1$, the DFE is locally asymptotically stable while if $\mathcal{R}_{0}^{PDE} > 1$, then there exists $\eta > 0$ such that any positive solution of (\ref{1}) linearized at the DFE satisfies 
\begin{equation}\label{8}
\limsup_{t\to \infty} \lVert (S(\cdot, t), I(\cdot, t), R(\cdot, t), B(\cdot, t)) - (m^{\ast}, 0, 0, 0) \rVert_{C([0,1])} \geq \eta.
\end{equation}
We emphasize here that both these stability and persistence results were local; specifically the results were obtained via analysis on the $(S, I, R, B)$ that solves the system (\ref{1}) linearized at the DFE $(m^{\ast}, 0, 0, 0)$, not necessary the actual system (\ref{1}). The major difficulty was that because by definition $\mathcal{R}_{0}^{PDE}$ gives information only on the linearized system (see the definition $\mathcal{R}_{0}^{PDE} = r(-\Theta_{1}\Theta_{2}^{-1})$, (\ref{7}), (\ref{13}), (\ref{14})), it seemed difficult to utilize the hypothesis that $\mathcal{R}_{0}^{PDE}  >1$ or $\mathcal{R}_{0}^{PDE} < 1$ to deduce any information on the actual system (\ref{1}) (see e.g. Theorem 4.3 (ii) of \cite{WZ12}).  

In this paper, we overcome this major obstacle and extend these stability results to global; moreover, we obtain the uniform persistence result. We also extend Lemma 1 of \cite{LZ11}, which have proven to be useful in various other papers (e.g. Lemma 3.2, \cite{VWZ12}) to the case with convection, which we believe will be useful in many future work. For simplicity, let us hereafter denote $\mathcal{R}_{0} \triangleq \mathcal{R}_{0}^{PDE}$, and by $u(x,t,\phi)$ the solution at $(x,t) \in [0,1] \times [0, \infty)$ that initiated from $\phi$: 

\begin{theorem}
Suppose $D = D_{1} = D_{2} = D_{3}, \phi \in X^{+}$. Then the system (\ref{1}) subjected to (\ref{2}), (\ref{3}) admits a unique global nonnegative solution $u(x,t,\phi)$ such that $u(x, 0, \phi) = \phi(x)$. Moreover, if $\mathcal{R}_{0} < 1$, then the DFE $(m^{\ast}, 0, 0, 0)$ is globally attractive. 
\end{theorem}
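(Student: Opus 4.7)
The plan is to first establish global well-posedness via standard parabolic semigroup theory, treating (\ref{1}) as an abstract Cauchy problem on $X^{+}$. Local existence and uniqueness follow from the reaction terms being locally Lipschitz and the diffusion/convection operators (with the mixed Neumann--Robin boundary conditions in (\ref{3})) generating analytic semigroups on the respective $X_{i}$. Quasi-positivity of the reaction vector field together with a standard invariant set argument gives invariance of $X^{+}$. To upgrade to global existence I would derive uniform $L^{\infty}$ bounds: exploiting the hypothesis $D_{1} = D_{2} = D_{3} = D$, the total $N \triangleq S + I + R$ satisfies $\partial_{t} N = D \partial_{xx}^{2} N + b - d N$ with Neumann boundary conditions, so by comparison with its ODE $\limsup_{t\to\infty} \lVert N(\cdot,t)\rVert_{C([0,1])} \leq m^{\ast}$, yielding uniform bounds on $S,I,R$. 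For $B$, using $gB(1-B/K_{B}) \leq gB$ and the already-bounded $I$, comparison with the scalar convection-diffusion-reaction equation $\partial_{t} \bar{B} = D_{4} \partial_{xx}^{2} \bar{B} - U \partial_{x} \bar{B} + \xi M + g\bar{B}(1-\bar{B}/K_{B}) - \delta \bar{B}$ (with $M$ an upper bound on $I$) yields a uniform upper bound on $B$.

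For global attractivity when $\mathcal{R}_{0} < 1$, I would proceed by a two-step asymptotic argument. Since $\limsup_{t\to\infty} S(\cdot,t) \leq m^{\ast}$ uniformly, for any $\epsilon > 0$ there is $T_{\epsilon}$ with $S(x,t) \leq m^{\ast}+\epsilon$ for $t \geq T_{\epsilon}$. Using $\frac{B}{B+K} \leq B/K$ and $gB(1-B/K_{B}) \leq gB$, the pair $(I,B)$ is dominated for $t \geq T_{\epsilon}$ by the linear cooperative system
\[
\partial_{t} \begin{pmatrix} \tilde{I} \\ \tilde{B} \end{pmatrix} = \bigl(\Theta_{1}^{\epsilon} + \Theta_{2}^{\epsilon}\bigr) \begin{pmatrix} \tilde{I} \\ \tilde{B} \end{pmatrix},
\]
where $\Theta_{1}^{\epsilon}, \Theta_{2}^{\epsilon}$ are the operators in (\ref{7}) with $m^{\ast}$ replaced by $m^{\ast}+\epsilon$. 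Since $\mathcal{R}_{0} = r(-\Theta_{1}\Theta_{2}^{-1}) < 1$, continuity of the spectral radius in $\epsilon$ should give that for small $\epsilon$ the principal eigenvalue of $\Theta_{1}^{\epsilon} + \Theta_{2}^{\epsilon}$ is strictly negative, whence every nonnegative solution of the dominating linear system decays exponentially in $C([0,1])$. A Krein--Rutman comparison then forces $I(\cdot,t), B(\cdot,t) \to 0$ uniformly in $x$.

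Once $I, B \to 0$, the equation for $R$ yields $R \to 0$ by the theory of asymptotically autonomous semiflows, and the resulting perturbed scalar equation $\partial_{t} S = D \partial_{xx}^{2} S + b - dS + \sigma R - \beta_{1} SI - \beta_{2} S \tfrac{B}{B+K}$ drives $S(\cdot,t) \to m^{\ast}$ uniformly. The main obstacle I anticipate is the spectral step: one must justify that $\mathcal{R}_{0} < 1$ transfers to strict negativity of the principal eigenvalue of the parabolic operator $\Theta_{1}^{\epsilon} + \Theta_{2}^{\epsilon}$ on $[0,1]$ under the Neumann/Robin boundary conditions, and in particular that the convective term $-U \partial_{x}$ in the $B$-component still admits a positive principal eigenfunction with continuous dependence on $\epsilon$. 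This is presumably where the paper's promised extension of Lemma~1 of \cite{LZ11} to the convection setting is indispensable, since the standard self-adjoint Krein--Rutman framework does not apply directly.
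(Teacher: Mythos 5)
Your proposal follows essentially the same route as the paper: global existence via the $V=S+I+R$ reduction and comparison, then for $\mathcal{R}_{0}<1$ an eventual bound $S\leq m^{\ast}+\epsilon_{0}$, domination of $(I,B)$ by the linear cooperative system with $m^{\ast}$ replaced by $m^{\ast}+\epsilon_{0}$, exponential decay from the negative principal eigenvalue, and the asymptotically autonomous argument for $R$ and $S$. The only slight misdiagnosis is in your last paragraph: the paper resolves the spectral step not through its extension of Lemma 1 of \cite{LZ11} (Proposition 4.1 is used only for the scalar $V$-equation, where $\overline{U}=0$), but via Theorem 3.5 of \cite{T09} for the sign equivalence of $s(\Theta)$ and $\mathcal{R}_{0}-1$, together with Theorem 7.6.1 of \cite{S95}, which supplies the principal eigenvalue and positive eigenfunction for the cooperative irreducible $(I,B)$ system even with the convective term.
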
 

\begin{theorem}
Suppose $D = D_{1} = D_{2} = D_{3}, \phi \in X^{+}$ and $g < \delta$. Let $u(x,t,\phi)$ be the unique global nonnegative solution of the system (\ref{1}) subjected to (\ref{2}), (\ref{3}) such that $u(x, 0, \phi) = \phi(x)$ and $\Phi_{t}(\phi) = u(t, \phi)$ be its solution semiflow. If $\mathcal{R}_{0} > 1$ and $\phi_{2}(\cdot) \not\equiv 0$ or $\phi_{4}(\cdot) \not\equiv 0$, then the system (\ref{1}) admits at least one positive steady state $a_{0}$ and there exists $\eta > 0$ such that
\begin{align}\label{9}
\liminf_{t\to\infty} u_{i}(x,t) \geq \eta, \hspace{3mm} \forall i = 1, 2, 4, 
\end{align}
uniformly $\forall \hspace{1mm} x \in [0,1]$.
\end{theorem}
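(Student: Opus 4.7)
The plan is to combine the abstract uniform persistence machinery of Magal--Zhao with a principal-eigenvalue analysis of the linearization of (\ref{1}) at the DFE $E_0 = (m^{\ast},0,0,0)$. First I would verify that the solution semiflow $\Phi_t$ on $X^+$ is well defined, point dissipative, and eventually compact: adding the first three equations gives $(S+I+R)_t = D\partial_{xx}^2(S+I+R) + b - d(S+I+R)$, so $\limsup_t \|S+I+R\|_{L^\infty} \le b/d$ by the parabolic maximum principle; the hypothesis $g < \delta$ applied to the $B$-equation, together with the ultimate bound on $I$, then yields an ultimate $L^\infty$-bound on $B$. Smoothing of the parabolic semigroups under the Neumann and Robin boundary conditions supplies compactness of $\Phi_t$ for $t>0$, hence a global compact attractor in $X^+$.

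Next I would set up the persistence decomposition
\begin{equation*}
M_0 \triangleq \{\phi \in X^+ : \phi_2 \not\equiv 0 \text{ or } \phi_4 \not\equiv 0\}, \qquad \partial M_0 \triangleq X^+ \setminus M_0.
\end{equation*}
Applying the strong parabolic maximum principle to the $I$- and $B$-equations, using the nonnegativity of the coupling term $\xi I$ in the $B$-equation and the positivity of $S$, I expect $\Phi_t(M_0) \subset M_0$ for $t>0$, with $I(\cdot,t)$ and $B(\cdot,t)$ strictly positive on $[0,1]$. On $\partial M_0$ the system reduces to $I \equiv B \equiv 0$ and $(S,R) \to (m^{\ast}, 0)$, so the omega-limit set of every trajectory starting in $\partial M_0$ is $\{E_0\}$.

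The heart of the argument, and the main obstacle, is to show that $E_0$ is a uniform weak repeller for $M_0$. I plan to argue by contradiction: if there were $\phi \in M_0$ with $\limsup_t \|\Phi_t\phi - E_0\|_{C([0,1])} < \epsilon$, then for $t$ sufficiently large $S(x,t) > m^{\ast} - \epsilon$ and $B(x,t) < \epsilon$, so the $(I,B)$-pair would satisfy the cooperative differential inequality
\begin{align*}
I_t &\ge D \partial_{xx}^2 I + \beta_1(m^{\ast}-\epsilon) I + \beta_2(m^{\ast}-\epsilon)\tfrac{B}{K+\epsilon} - (d+\gamma) I, \\
B_t &\ge D_4 \partial_{xx}^2 B - U \partial_x B + \xi I + g(1-\epsilon/K_B) B - \delta B,
\end{align*}
subject to the inherited boundary conditions. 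Since $\mathcal{R}_0 > 1$ is equivalent to positivity of the principal eigenvalue $\lambda_0$ of the linearized cooperative operator at $\epsilon = 0$, continuity of the principal eigenvalue in $\epsilon$ will allow me to pick $\epsilon$ small enough that the perturbed operator still admits $\lambda_\epsilon > 0$ with a strictly positive eigenfunction $(\varphi_1,\varphi_2)$; standard comparison for the quasi-monotone $(I,B)$ subsystem then yields $(I,B)(\cdot,t) \ge c\,e^{\lambda_\epsilon t}(\varphi_1,\varphi_2)$ for some $c>0$, contradicting smallness near $E_0$. Establishing this principal-eigenvalue perturbation for the non-selfadjoint convection-diffusion operator $D_4\partial_{xx}^2 - U\partial_x$ under Robin boundary data is precisely where the authors' convection-extension of Lemma 1 of \cite{LZ11}, announced in the introduction, enters.

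With these three ingredients in place, the abstract persistence theorem (Magal--Zhao / Hale--Waltman type) yields $\eta > 0$ such that $\liminf_t u_i(x,t) \ge \eta$ uniformly in $x \in [0,1]$ for $i = 1, 2, 4$ (persistence of $R$ is not asserted but would follow from that of $I$ via the $R$-equation). The same framework, combined with the global compact attractor in $M_0$ and the compactness of $\Phi_t$, produces a fixed point of $\Phi_t$ in $M_0$, i.e., a positive steady state $a_0$, completing the proof.
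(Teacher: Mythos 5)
Your plan follows essentially the same route as the paper's proof: a global compact attractor via point dissipativity and compactness of $\Phi_t$, the decomposition into $\mathbb{W}_0$ and its boundary, strong-maximum-principle positivity to get forward invariance of $\mathbb{W}_0$ and convergence to the DFE on $M_\partial$, a weak-repeller contradiction argument using the sign of the principal eigenvalue of the cooperative $(I,B)$ subsystem perturbed by $\epsilon$ together with comparison, and then the Smith--Zhao/Magal--Zhao persistence and fixed-point machinery for the uniform lower bound and the positive steady state. The one detail left implicit is the case $i=1$: the generalized distance function is built from the second and fourth components only, so persistence of $S$ does not come from the abstract theorem but from a separate comparison argument (the paper's Proposition 6.1 gives $\liminf_{t\to\infty} S(\cdot,t,\phi) \geq b/(2\beta_1 m^{\ast} + \beta_2 + d)$, after which $\eta$ is shrunk accordingly).
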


\begin{remark}
\begin{enumerate}
\item []	
\item We remark that typically the persistence results in the case $\mathcal{R}_{0} > 1$ requires a hypothesis that the solution is positive (see e.g. Theorem 4.3 (ii) of \cite{WZ12} and also Theorem 2.3 (2) of \cite{YW16}). In the statement of Theorem 2.2, we only require that $\phi_{2}(\cdot) \not\equiv 0$ or $\phi_{4}(\cdot) \not\equiv 0$. Due to the Proposition 6.1, we are able to relax these conditions. Moreover, we note that $\sup$ in (\ref{8}) is replaced by $\inf$ in (\ref{9}). 
\item The proof was inspired by the work of \cite{LZ11, VWZ12, WZ11}. 
\item We remark that it remains unknown what happens when $\mathcal{R}_{0} = 1$; for this matter, not global but even in the local case, it remains an open problem (see Theorem 2.3 \cite{YW16}). 
\item In the system (\ref{1}), we chose a particular case of 
\begin{equation*}
f_{1}(I) = \beta_{1}I, \hspace{3mm} f_{2}(B) = \beta_{2}\frac{B}{B+K}, \hspace{3mm} h(B) = gB \left(1 - \frac{B}{K_{B}}\right)
\end{equation*}
where $f_{1},f_{2}, h$ represent the direct, indirect transmission rates, intrinsic growth rate of bacteria respectively (see \cite{WW15, WPW16}). We remark for the purpose of our subsequent proof that defining this way, $f_{1}, f_{2}, h$ are all Lipschitz. It is clear from the proof that some generalization is possible. 
\end{enumerate}
\end{remark}

The rest of the article is organized as follows. The next section presents preliminary results of this study. Section 4 verifies a key proposition as an extension of Lemma 1 of \cite{LZ11}, which has proved to be useful in various context.  Our main results are established in Sections 5-6. By employing the theory of monotone dynamical systems \cite{Z03}, we prove that (1) the disease free equilibrium (DFE) is globally asymptotically stable if the basic reproduction number $R_0$ is less than unity; (2) there exists at least one positive steady state and the disease is uniformly persistent in both the human and bacterial populations if $R_0>1$. Additionally, we identify a precise condition on model parameters for which the system admits a unique nonnegative solution, and study the global attractivity of this solution. In the end, a brief discussion is given in Section 7, followed by Appendix. 

\section{Preliminaries}
When there exists a constant $c= c(a,b) \geq 0$ such that $A \leq cB, A = cB$, we write $A \lesssim_{a,b} B, A \approx_{a,b} B$. 
 
Following \cite{S95, YW16}, we let $A_{i}^{0}, i = 1, 2, 3$ denote the differentiation operator 
\begin{align*}
A_{i}^{0}u_{i} \triangleq D \partial_{xx}^{2} u_{i}, \hspace{3mm} A_{4}^{0} \triangleq D_{4} \partial_{xx}^{2} u_{4} - U \partial_{x} u_{4}, 
\end{align*}
defined on their domains 
\begin{align*}
& D(A_{i}^{0}) \triangleq \{\psi \in C^{2}((0,1)) \cap C^{1}([0,1]): A_{i}^{0} \psi \in C([0,1]), \partial_{x} \psi \rvert_{x=0,1} = 0 \},  \,\, i=1,2,3,\\
&D(A_{4}^{0}) \triangleq \{\psi \in C^{2}((0,1)) \cap C^{1}([0,1]):\\
& \hspace{33mm} A_{4}^{0} \psi\in C([0,1]), D_{4} \partial_{x} \psi - U \psi \rvert_{x=0} = \partial_{x} \psi \rvert_{x=1} = 0 \},
\end{align*}
respectively. We can then define $A_{i}, (i = 1, 2, 3, 4)$ to be the closure of $A_{i}^{0}$ so that $A_{i}$ on $X_{i}$ generates an analytic semigroup of bounded linear operator $T_{i}(t), t \geq 0$ such that $u_{i}(x,t) = (T_{i}(t) \phi_{i})(x)$ satisfies 
\begin{equation*}
\partial_{t} u_{i}(t) = A_{i} u_{i}(t), \hspace{3mm} u_{i}(0) = \phi_{i} \in D(A_{i}) 
\end{equation*}
where 
\begin{equation*}
D(A_{i}) = \left\{\psi \in X_{i} : \lim_{t\to 0^{+}} \frac{(T_{i}(t) - I)\psi}{t} = A_{i} \psi \text{ exists } \right\};
\end{equation*}
that is, for $i = 1, 2, 3$, 
\begin{equation*}
\partial_{t} u_{i}(x,t) = D_{i} \partial_{xx}^{2} u_{i}(x,t), t > 0, x \in (0,1), \hspace{3mm} 
\partial_{x} u_{i} \rvert_{x= 0 ,1} = 0, \hspace{3mm} u_{i}(x,0) = \phi_{i}(x), 
\end{equation*}
and 
\begin{equation*}
\begin{cases}
\partial_{t} u_{4}(x,t) = D_{4} \partial_{xx}^{2} u_{4}(x,t) - U \partial_{x} u_{4} (x,t), \hspace{3mm} t > 0, x \in (0,1),\\
D_{4} \partial_{x} u_{4} - U u_{4} \rvert_{x=0} = \partial_{x} u_{4} \rvert_{x=1} = 0, \hspace{3mm} u_{4}(x,0) = \phi_{4}(x). 
\end{cases}
\end{equation*}
It follows that each $T_{i}$ is compact (see e.g. pg. 121 \cite{S95}). Moreover, by Corollary 7.2.3, pg. 124 \cite{S95}, because $X_{i}^{+} = C([0,1], \mathbb{R}_{+})$, each $T_{i}(t)$ is strongly positive (see Definition \ref{pg. 38, 40, 46, [Z03]}). 

We now let 
\begin{subequations}\label{10}
\begin{align}
& F_{1} \triangleq b - \beta_{1} SI - \beta_{2} S \left(\frac{B}{B+K}\right) - dS + \sigma R,\\
& F_{2} \triangleq \beta_{1} SI + \beta_{2} S \left(\frac{B}{B+K}\right) - I(d+\gamma),\\
& F_{3} \triangleq \gamma I - R(d+\sigma),\\
& F_{4} \triangleq \xi I + gB \left(1 - \frac{B}{K_{B}}\right) - \delta B, 
\end{align}
\end{subequations}
and $F \triangleq (F_{1}, F_{2},F_{3}, F_{4})$. Let $T(t): X \mapsto X$ be defined by $T(t) \triangleq \prod_{i=1}^{4} T_{i}(t)$ so that it is a semigroup of operator on $X$ generated by $A \triangleq \prod_{i=1}^{4} A_{i}$ with domain $D(A) \triangleq \prod_{i=1}^{4} D(A_{i})$ and hence we can write (\ref{1}) as 
\begin{equation*}
\partial_{t} u = A u + F(u), \hspace{3mm} u(0) = u_{0} = \phi.
\end{equation*}
We recall some relevant definitions 
\begin{define}\label{pg. 2, 3, 11 [Z03]}
(pg. 2, 3, 11 \cite{Z03}) Let $(Y,d)$ be any metric space and $f: Y \mapsto Y$ a continuous map. A bounded set $A$ is said to attract a bounded set $B \subset Y$ if $\lim_{n\to\infty} \sup_{x\in B} d(f^{n}(x), A) = 0$. A subset $A \subset Y$ is an attractor for $f$ if $A$ is nonempty, compact and invariant $(f(A) = A)$, and $A$ attracts some open neighborhood of itself. A global attractor for $f$ is an attractor that attracts every point in $Y$. Moreover, $f$ is said to be point dissipative if there exists a bounded set $B_{0}$ in $Y$ such that $B_{0}$ attracts each point in $Y$. Finally, a nonempty invariant subset $M$ of $Y$ is isolated for $f: Y \mapsto Y$ if it is the maximal invariant set in some neighborhood of itself. 
\end{define}
 
\begin{define}\label{pg. 38, 40, 46, [Z03]}
(pg. 38, 40, 46, \cite{Z03}) Let $E$ be an ordered Banach space with positive cone $P$ such that int$(P) \neq \emptyset$. For $x, y \in E$, we write $x \geq y$ if $x- y \in P, x > y$ if $x-y \in P \setminus \{0\}$, and $x \gg y$ if $x-y \in \text{int}(P)$.  
	
A linear operator $L$ on $E$ is said to be positive if $L(P) \subset P$, strongly positive if $L(P\setminus \{0\}) \subset \text{int} (P)$. For any subset $U$ of $E$, $f: U \mapsto U$, a continuous map, $f$ is said to be monotone if $x \geq y$ implies $f(x) \geq f(y)$, strictly monotone if $x > y$ implies $f(x) > f(y)$, and strongly monotone if $x > y$ implies $f(x) \gg f(y)$. 
	
Let $U \subset P$ be nonempty, closed, and order convex. Then a continuous map $f: U \mapsto U$ is said to be subhomogeneous if $f(\lambda x) \geq \lambda f(x)$ for any $x \in U$ and $\lambda \in [0,1]$, strictly subhomogeneous if $f(\lambda x) > \lambda f(x)$ for any $x \in U$ with $x \gg 0$ and $\lambda \in (0,1)$, and strongly subhomogeneous if $f(\lambda x) \gg \lambda f(x)$ for any $x \in U$ with $x \gg 0$ and $\lambda \in (0,1)$.  
\end{define}

\begin{define}\label{pg. 56, 129 [S95]}
(pg. 56, 129, \cite{S95}) An $n\times n$ matrix $M = (M_{ij})$ is irreducible if $\forall  \hspace{1mm} I \subsetneq N = \{1, \hdots, n\}$, $I \neq \emptyset$, there exists $\hspace{1mm} i \in I$ and $j \in J = N \setminus I$ such that $M_{ij} \neq 0$. Moreover, $F: [0,1] \times \Lambda \mapsto \mathbb{R}^{n}, \Lambda$ any nonempty, closed, convex subset of $\mathbb{R}^{n}$, is cooperative if $\frac{\partial F_{i}}{\partial u_{j}}(x,u) \geq 0,  \hspace{1mm} \forall  \hspace{1mm} (x, u) \in [0,1]\times \Lambda, i \neq j$. 
\end{define}

\begin{lemma}\label{Theorem 7.3.1, Corollary 7.3.2, [S95]}
(Theorem 7.3.1, Corollary 7.3.2, \cite{S95}) Suppose that $F: [0,1] \times \mathbb{R}_{+}^{4} \mapsto \mathbb{R}^{4}$ has the property that 
\begin{equation*}
F_{i}(x, u) \geq 0 \hspace{3mm} \forall  \hspace{1mm} x \in [0,1], u \in \mathbb{R}_{+}^{4} \text{ and } u_{i} = 0. 
\end{equation*} 
Then $\forall \hspace{1mm} \psi \in X^{+}$, 
\begin{equation*}
\begin{cases}
\partial_{t}u_{i}(x,t) = D_{i} \partial_{xx}^{2}u_{i}(x,t) + F_{i}(x, u(x,t)), &t > 0, x \in (0,1),\\
\alpha_{i}(x) u_{i}(x,t) + \delta_{i} \partial_{x}u_{i}(x,t) = 0, &t > 0, x = 0, 1,\\
u_{i}(x,0) = \psi_{i}(x), &x \in (0,1),
\end{cases}
\end{equation*}
has a unique noncontinuable mild solution $u(x,t, \psi) \in X^{+}$ defined on $[0, \sigma)$ where $\sigma = \sigma(\psi) \leq \infty$ such that if $\sigma < \infty$, then $\lVert u(t) \rVert_{C([0,1])} \to \infty$ as $t \to \sigma$ from below. Moreover, 
\begin{enumerate}
\item $u$ is  continuously differentiable in time on $(0, \sigma)$, 
\item it is in fact a classical solution,
\item if $\sigma(\psi) = +\infty$ $\forall  \hspace{1mm} \psi \in X^{+}$, then $\Psi_{t}(\psi) = u(t, \psi)$ is a semiflow on $X^{+}$,  
\item if $Z\subset X^{+}$ is closed and bounded, $t_{0} > 0$ and $\cup_{t \in [0, t_{0}]} \Psi_{t}(Z)$ is bounded, then $\Psi_{t_{0}}(Z)$ has a compact closure in $X^{+}$. 
\end{enumerate}
\end{lemma}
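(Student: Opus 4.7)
The plan is to read the system as an abstract semilinear Cauchy problem
\begin{equation*}
\partial_t u = Au + F(u), \qquad u(0) = \psi,
\end{equation*}
on $X = C([0,1], \mathbb{R}^4)$, where $A = \prod_{i=1}^4 A_i$ is the sectorial generator set up in the preliminaries and $F$ is extended from $[0,1]\times \mathbb{R}_+^4$ to all of $[0,1] \times \mathbb{R}^4$ in a locally Lipschitz, quasi-positivity-preserving way (e.g.\ by reflecting/truncating at $u_i = 0$ so that $\tilde F_i(x,u) \geq 0$ whenever $u_i \leq 0$). Local existence and uniqueness of a mild solution satisfying the variation-of-constants formula
\begin{equation*}
u(t) = T(t)\psi + \int_0^t T(t-s) F(u(\cdot,s))\,ds
\end{equation*}
then follows from a contraction mapping argument on $C([0,\tau], X)$ for small $\tau = \tau(\lVert \psi \rVert_{C([0,1])})$. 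Concatenation of local solutions produces the noncontinuable mild solution on a maximal interval $[0,\sigma)$, and the standard abstract argument yields the blow-up alternative: if $\sigma < \infty$ then $\lVert u(t) \rVert_{C([0,1])} \to \infty$.

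The next task is invariance of $X^+$. This is exactly where the tangency hypothesis ``$F_i(x,u) \geq 0$ whenever $u_i = 0$'' is used. I would perturb $F$ to $F^\varepsilon(u) = F(u) + \varepsilon e$ with $e = (1,1,1,1)$, solve the perturbed Cauchy problem, and argue that its mild solution $u^\varepsilon$ is strictly positive: if $u_i^\varepsilon$ touched zero for the first time at some $(x_0, t_0)$, then using the quasi-positivity of $F$ and the strong positivity of each semigroup $T_i(t)$ (which was recorded in the preliminaries via Corollary 7.2.3 of Smith) one reaches a contradiction with the variation-of-constants representation. Sending $\varepsilon \to 0^+$ and invoking continuous dependence on data, positivity is preserved by the original flow. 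This step is also where the abstract mixed boundary condition $\alpha_i u_i + \delta_i \partial_x u_i = 0$ matters, since the strong positivity of $T_i(t)$ already encodes compatibility with the boundary operator, so the argument goes through without needing a pointwise boundary-point analysis.

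For the regularity statements (1) and (2), I would exploit analyticity of $T(t)$: once $u$ lies in $X$ and $F(u)$ is locally Hölder continuous in time (which follows because $F$ is locally Lipschitz and $T(t)$ is analytic), standard semilinear parabolic theory (Pazy, Ch.~6; Henry) yields $u(\cdot,t) \in D(A)$ for $t > 0$, continuous differentiability in $t$ on $(0,\sigma)$, and, after a bootstrap in spatial Hölder spaces, a classical solution satisfying the equation and the boundary condition in the pointwise sense. The semiflow property (3) is the routine consequence of uniqueness together with the continuous dependence $\lVert u(\cdot, t, \psi) - u(\cdot,t,\tilde\psi) \rVert$ estimate obtained from the mild formulation and Gronwall's inequality.

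The main obstacle I anticipate is the compactness assertion (4). Here I would write
\begin{equation*}
\Psi_{t_0}(\psi) = T(t_0)\psi + \int_0^{t_0} T(t_0-s) F(\Psi_s(\psi))\,ds
\end{equation*}
for $\psi \in Z$. The first term is precompact since each $T_i(t_0)$ is compact on $X_i$ (noted in the preliminaries). For the integral term, the boundedness of $\bigcup_{t \in [0,t_0]} \Psi_t(Z)$ yields a uniform bound $M$ on $F(\Psi_s(\psi))$ for $\psi \in Z$, $s \in [0,t_0]$. For small $\eta > 0$, split the integral at $t_0 - \eta$: the tail $\int_{t_0-\eta}^{t_0}$ has norm $\lesssim M\eta$ uniformly in $\psi$, while the head equals $T(\eta) \int_0^{t_0 - \eta} T(t_0 - \eta - s) F(\Psi_s(\psi))\,ds$, which is precompact because $T(\eta)$ is compact applied to a bounded set. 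A standard totally-bounded-set argument then produces precompactness in $X^+$. The delicate point is making the $\eta$-splitting work uniformly over all $\psi \in Z$; this is where having $F$ map bounded sets to bounded sets (a consequence of local Lipschitzness together with the assumed uniform bound on $\Psi_s(Z)$) is essential.
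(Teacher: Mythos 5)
The paper does not actually prove this lemma: it is quoted verbatim from Smith's monograph (Theorem 7.3.1 and Corollary 7.3.2 of \cite{S95}, together with the remark on pg.~121 that the results extend to general second-order elliptic operators), and the proof there follows exactly the route you outline --- abstract mild formulation with a locally Lipschitz, quasi-positive extension of $F$, invariance of the cone $X^{+}$ via the subtangential condition encoded by ``$F_{i}(x,u)\geq 0$ when $u_{i}=0$'' together with positivity of each $T_{i}(t)$, analytic-semigroup regularity for the classical-solution and $C^{1}$-in-time claims, and compactness of $T(\eta)$ plus the $\eta$-splitting of the Duhamel integral for assertion (4). Your reconstruction is correct and essentially coincides with the cited argument (the only cosmetic differences being that mere positivity, not strong positivity, of $T_{i}(t)$ suffices for cone invariance, and that Smith phrases the positivity step via the subtangential condition rather than your $\varepsilon$-perturbation).
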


\begin{remark}\label{pg. 121, [S95]}
This lemma remains valid even if the Laplacian is replaced by a general second order differentiation operator; in fact, all results from Chapter 7, \cite{S95} remain valid for a general second order differentiation operator (see pg. 121, \cite{S95}). In relevance we also refer readers to Theorem 1.1, \cite{MS90}, Corollary 8.1.3 \cite{W96} for similar general well-posedness results. 
\end{remark}

The following result was obtained in \cite{YW16}:
\begin{lemma}\label{Theorems 2.1, 2.2, [YW16]}
(Theorems 2.1, 2.2, \cite{YW16}) $\forall \hspace{1mm} \phi \in X^{+}$ the system (\ref{1}) subjected to (\ref{2}) and (\ref{3}) admits a unique nonnegative mild solution on the interval of existence $[0, \sigma)$ where $\sigma = \sigma(\phi)$. If $\sigma < \infty$, then $\lVert u(t) \rVert_{C([0,1])}$ becomes unbounded as $t$ approaches $\sigma$ from below. 

Moreover, if $D_{1} = D_{2} = D_{3}$, then $\sigma = + \infty$. Therefore, $\Phi_{t}(\phi) = u(t,\phi)$ is a semiflow on $X^{+}$. 
\end{lemma}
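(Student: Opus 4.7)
The plan is to apply Lemma~\ref{Theorem 7.3.1, Corollary 7.3.2, [S95]} (with the general second-order extension granted by Remark~\ref{pg. 121, [S95]}) for local well-posedness, and then exploit the equidiffusion hypothesis $D_1=D_2=D_3$ to derive uniform $L^\infty$ bounds that preclude finite-time blow-up. The local step requires verifying the quasi-positivity of $F=(F_1,F_2,F_3,F_4)$ defined in (\ref{10}); direct inspection gives
\begin{equation*}
F_1\rvert_{S=0}=b+\sigma R,\quad F_2\rvert_{I=0}=\beta_2 S\frac{B}{B+K},\quad F_3\rvert_{R=0}=\gamma I,\quad F_4\rvert_{B=0}=\xi I,
\end{equation*}
each nonnegative whenever the remaining components lie in $\mathbb{R}_+$. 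The Neumann conditions for $S,I,R$ and the convective Robin condition for $B$ are covered by the boundary framework of Lemma~\ref{Theorem 7.3.1, Corollary 7.3.2, [S95]}, so one obtains a unique noncontinuable nonnegative classical solution $u(\cdot,t,\phi)$ on some maximal interval $[0,\sigma(\phi))$ together with the stated blow-up alternative.

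To upgrade to a global solution under $D_1=D_2=D_3=D$, the key observation is that setting $N\triangleq S+I+R$ and adding the first three equations causes every transmission, recovery, and immunity-loss term to cancel, yielding the scalar linear reaction-diffusion equation
\begin{equation*}
\partial_t N = D\,\partial_{xx}^2 N + b - dN,\qquad \partial_x N\rvert_{x=0,1}=0.
\end{equation*}
The standard parabolic maximum principle (applied to $V\triangleq N-b/d$, which satisfies $\partial_t V=D\partial_{xx}^2 V-dV$ with Neumann data) yields $\|N(\cdot,t)\|_{C([0,1])}\le\|\phi_1+\phi_2+\phi_3\|_{C([0,1])}+b/d$ for every $t\in[0,\sigma)$, and nonnegativity of the components produces componentwise bounds on $S,I,R$. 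With $I$ then bounded by some constant $M_I$, the $B$ equation satisfies
\begin{equation*}
\partial_t B - D_4\partial_{xx}^2 B + U\partial_x B \le \xi M_I + (g-\delta)B - (g/K_B)B^2,
\end{equation*}
whose right-hand side is strictly concave in $B$, positive at $B=0$, and tends to $-\infty$ as $B\to\infty$, hence admits a unique positive root $b^\ast$. A comparison argument with the ODE $b'(t)=\xi M_I+(g-\delta)b-(g/K_B)b^2$ then yields $\|B(\cdot,t)\|_{C([0,1])}\le\max\{\|\phi_4\|_{C([0,1])},b^\ast\}$. Thus $\|u(\cdot,t)\|_{C([0,1])}$ remains bounded on $[0,\sigma)$, so the blow-up alternative forces $\sigma=+\infty$, and $\Phi_t(\phi)=u(t,\phi)$ is a semiflow on $X^+$ by uniqueness and continuous dependence on initial data.

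The main technical care is required in the comparison argument for $B$. A spatially constant supersolution $\bar b(t)$ satisfies $D_4\partial_x\bar b-U\bar b\rvert_{x=0}=-U\bar b\le0$, so $\bar b$ is not automatically a supersolution with respect to the Robin condition in the strict sense; one must instead invoke a maximum principle for convection-diffusion operators on a bounded interval that accommodates this one-sided boundary inequality, or equivalently reduce the Robin condition to a Neumann one via an integrating-factor transformation in $x$. The equidiffusion hypothesis $D_1=D_2=D_3$ is essential for the scalar reduction to $N$; without it, bounding $S,I,R$ individually would require more delicate iterative $L^p$--$L^\infty$ estimates for reaction-diffusion systems, but this falls outside the scope of the stated hypothesis and is not needed here.
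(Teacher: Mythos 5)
Your proof is correct, and your local-existence step (quasi-positivity of $F$ plus Lemma~\ref{Theorem 7.3.1, Corollary 7.3.2, [S95]} and Remark~\ref{pg. 121, [S95]}) is essentially what the paper imports from Theorem 2.1 of \cite{YW16}. For the global step your decomposition $V=S+I+R$ is the same as the paper's, but the way you extract bounds from it is genuinely different. The paper's Appendix (Proposition 8.1) runs $L^{p}$ energy estimates on $V$ and on $B$, applies a Gronwall-type argument, and then lets $p\to\infty$; the resulting bound carries factors such as $e^{\sigma g}$ and $b\sigma$, so it is finite only for finite $\sigma$ --- exactly enough to contradict the blow-up alternative, and no more. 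You instead apply the pointwise parabolic maximum principle to $V-b/d$ and an ODE comparison for $B$ against the concave nonlinearity $\xi M_{I}+(g-\delta)B-(g/K_{B})B^{2}$, which yields bounds \emph{uniform in time}. Your route therefore buys more (ultimate boundedness, which the paper must re-derive later via Proposition 4.1 when establishing point dissipativity in Proposition 6.2), at the cost of needing a comparison principle compatible with the convective Robin condition; the paper's $L^{p}$ route absorbs the convection and boundary contributions into the $U^{2}/(4D_{4}(p-1))$ coefficient and never confronts that issue. On the boundary point you flag: the constant supersolution is in fact admissible, since at $x=0$ the outward normal is $-\partial_{x}$, so the boundary operator in maximum-principle form is $D_{4}\partial_{\nu}w+Uw$, and a nonnegative constant $\bar b$ gives $D_{4}\partial_{\nu}\bar b+U\bar b=U\bar b\geq 0$; the sign only looks wrong when written with $\partial_{x}$ in place of $\partial_{\nu}$. (A minor quibble: your bound $\lVert N(\cdot,t)\rVert_{C([0,1])}\leq \lVert \phi_{1}+\phi_{2}+\phi_{3}\rVert_{C([0,1])}+b/d$ should read $\max\{\lVert N_{0}\rVert_{C([0,1])},\,b/d\}+b/d$ or similar, but this does not affect the argument.)
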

\begin{remark}
In the statement of Theorems 2.1, 2.2 of \cite{YW16}, we required the initial regularity to be in $X^{+}\cap H^{1}([0,1])$ where $H^{1}([0,1]) = \{f: f, \partial_{x} f \in L^{2}([0,1])\}$ and obtained higher regularity beyond $C([0,1], \mathbb{R}^{4})$; here we point out that to show the global existence of the solution $u(t) \in X^{+}  \hspace{1mm} \forall  \hspace{1mm} t \geq 0$, it suffices that the initial data is in $X^{+}$. For completeness, in the Appendix we describe the estimate more carefully than that of Proposition 1 in \cite{YW16} that is needed to verify this claim. 
\end{remark}

\begin{lemma}\label{Theorem 2.3.2, [Z03]}
(Theorem 2.3.2, \cite{Z03}) Let $E$ be an ordered Banach space with positive cone $P$ such that int$(P) \neq \emptyset$, $U \subset P$ be nonempty, closed and order convex set. Suppose $f: U \mapsto U$ is strongly monotone, strictly subhomogeneous and admits a nonempty compact invariant set $K \subset \text{int}(P)$. Then $f$ has a fixed point $e \gg 0$ such that every nonempty compact invariant set of $f$ in $\text{int}(P)$ consists of $e$. 
\end{lemma}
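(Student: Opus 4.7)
The plan is to prove both conclusions by establishing that every nonempty compact invariant set $K' \subset \text{int}(P)$ is a singleton: applied to the hypothesized $K$ this yields a fixed point $e \gg 0$, and applied to $\{e_1, e_2\}$ for any two putative fixed points in $\text{int}(P)$ it forces their equality. The tool will be the Thompson part pseudometric on $\text{int}(P)$: set $\alpha(x,y) := \sup\{\lambda > 0 : \lambda y \leq x\}$ and $\beta(x,y) := \inf\{\mu > 0 : x \leq \mu y\}$, both finite, positive, and attained (by closedness of $P$ and $x, y \gg 0$); define $p(x,y) := \log(\beta(x,y)/\alpha(x,y)) \geq 0$, which vanishes precisely when $x$ and $y$ lie on a common positive ray.

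First I would show $f$ is $p$-nonexpansive and strictly contracts pairs with $p(x,y) > 0$. From $\alpha y \leq x \leq \beta y$, monotonicity gives $f(\alpha y) \leq f(x) \leq f(\beta y)$; strict subhomogeneity with parameter $\alpha \in (0,1)$ (when $\alpha < 1$) yields $f(\alpha y) > \alpha f(y)$, and with parameter $1/\beta \in (0,1)$ applied at $\beta y \gg 0$ (when $\beta > 1$) yields $\beta f(y) > f(\beta y)$. Chaining, $\alpha f(y) \leq f(x) \leq \beta f(y)$, hence $\alpha(f(x), f(y)) \geq \alpha$, $\beta(f(x), f(y)) \leq \beta$, and $p(f(x), f(y)) \leq p(x,y)$. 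When $p(x,y) > 0$ we have $\alpha y < x < \beta y$ strictly (else $\alpha = \beta$), and strong monotonicity upgrades the outer inequalities to $f(\alpha y) \ll f(x) \ll f(\beta y)$; combined with the strict-subhomogeneity differences $\beta f(y) - f(\beta y), f(\alpha y) - \alpha f(y) \in P$, this places $\beta f(y) - f(x)$ and $f(x) - \alpha f(y)$ in $\text{int}(P)$, yielding $\beta(f(x), f(y)) < \beta$ and $\alpha(f(x), f(y)) > \alpha$, hence $p(f(x), f(y)) < p(x,y)$.

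Next, suppose $K' \subset \text{int}(P)$ is nonempty, compact, and invariant. By continuity of $p$ and compactness of $K' \times K'$, the maximum $M := \max_{K' \times K'} p$ is attained at some $(x^*, y^*)$. Invariance $f(K') = K'$ supplies $\tilde x, \tilde y \in K'$ with $f(\tilde x) = x^*, f(\tilde y) = y^*$; then $M = p(f(\tilde x), f(\tilde y)) \leq p(\tilde x, \tilde y) \leq M$ forces $p(\tilde x, \tilde y) = M$, so if $M > 0$ the strict contraction from the previous step yields the contradiction $p(f(\tilde x), f(\tilde y)) < M$. Hence $M = 0$ and $K' \subset \{\lambda z : \lambda > 0\}$ for some $z \in \text{int}(P)$. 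Writing $K' = \{\lambda z : \lambda \in I\}$ for a compact $I \subset (0,\infty)$, strong monotonicity makes the induced scalar map $\tilde f : I \to I$ (defined by $f(\lambda z) = \tilde f(\lambda) z$) strictly increasing, and $\tilde f(I) = I$ then forces $\tilde f(\min I) = \min I$ and $\tilde f(\max I) = \max I$. If $\min I < \max I$, strict subhomogeneity with $\lambda = \min I / \max I \in (0,1)$ applied at $(\max I)z \gg 0$ delivers the contradiction $\min I = \tilde f(\min I) > (\min I / \max I)\,\tilde f(\max I) = \min I$. Hence $K' = \{e\}$, $f(e) = e$, and $e \gg 0$.

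The main obstacle will be the rigorous verification of the nonexpansiveness step, specifically the continuity of $p$ on $\text{int}(P) \times \text{int}(P)$ (needed for the maximum argument on $K' \times K'$) and the delicate boundary bookkeeping when $\alpha(x,y) = 1$ or $\beta(x,y) = 1$, where one of the two strict-subhomogeneity inputs is unavailable and must be replaced by the interior inequality supplied by strong monotonicity. Once this analytic core is established, everything else reduces to compactness arguments on either $K' \times K'$ or the one-dimensional compact invariant interval $I$.
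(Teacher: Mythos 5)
The paper itself offers no proof of this lemma --- it is quoted verbatim as Theorem 2.3.2 of \cite{Z03} --- so your attempt must be measured against the standard argument in that reference, which, like yours, runs through a part-metric contraction on a compact invariant set. Your overall architecture (nonexpansiveness, strict contraction off a degenerate set, a maximum argument on $K'\times K'$ using $f(K')=K'$, and a separate one-dimensional argument for the residual ray case) is the right one. However, there is a genuine gap in the analytic core: you work with the Hilbert projective pseudometric $p(x,y)=\log\bigl(\beta(x,y)/\alpha(x,y)\bigr)$, and for a merely subhomogeneous map the two inequalities you need, $f(\alpha y)\geq \alpha f(y)$ and $f(\beta y)\leq \beta f(y)$, hold only when $\alpha\leq 1$ and $\beta\geq 1$ respectively. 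Your parentheticals ``(when $\alpha<1$)'' and ``(when $\beta>1$)'' are not boundary bookkeeping: whenever $\alpha(x,y)>1$ (i.e.\ $x$ dominates a multiple of $y$ larger than $y$ itself) subhomogeneity gives $f(\alpha y)\leq \alpha f(y)$, the \emph{reverse} of what your chain needs, and nonexpansiveness genuinely fails. Concretely, in $E=\mathbb{R}^2$, $P=\mathbb{R}^2_+$, the map $f(u_1,u_2)=(u_1+\epsilon_0 u_2,\,1+u_2+\epsilon_0 u_1)$ with small $\epsilon_0>0$ is strongly monotone and strictly subhomogeneous (the difference $f(\lambda u)-\lambda f(u)=(0,1-\lambda)$ lies in $P\setminus\{0\}$), yet for $y=(1,1)$ and $x=(2,2+\epsilon)$ one has $p(x,y)=\log(1+\epsilon/2)$ arbitrarily small and positive while $p(f(x),f(y))$ is close to $\log(4/3)$; both the nonexpansiveness and the claimed strict contraction fail at such pairs. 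Consequently the maximum of $p$ over $K'\times K'$ need not be forced to zero by your argument.

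The repair is to replace the projective metric by the Thompson part metric $\rho(x,y)=\inf\{\log r: r\geq 1,\ r^{-1}y\leq x\leq r y\}$, whose defining constant satisfies $r\geq 1$ by construction, so that both subhomogeneity inequalities always point the right way and monotone subhomogeneous maps are $\rho$-nonexpansive (this is Lemma 2.3.1 of \cite{Z03}). The price is that $\rho(x,y)=0$ iff $x=y$, and for $x\neq y$ the combination ``strongly monotone $+$ strictly subhomogeneous'' yields only $r f(y)-f(x)\in P\setminus\{0\}$ on the side where $x=ry$ exactly (the same-ray configuration you deferred to your last paragraph); one then needs a second application of $f$ --- strong monotonicity pushes the nonzero order gap into $\mathrm{int}(P)$ --- to get $\rho(f^2(x),f^2(y))<\rho(x,y)$, and the maximum argument should be run for $f^2$ on $K'\times K'$ (note $f^2(K')=K'$). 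Two smaller points: you should justify that $\alpha y$ and $\beta y$ lie in the domain $U$ before applying $f$ to them (order convexity of $U$ alone does not give this; it is here that the precise hypotheses on $U$ in \cite{Z03} are used), and your final scalar argument on the ray, while correct as far as it goes, becomes unnecessary once the Thompson metric is used, since $\rho$ already separates points on a common ray.
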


\begin{lemma}\label{Theorem 3.4.8 [H88]}
(Theorem 3.4.8, \cite{H88}) If there exists $t_{1} \geq 0$ such that the $C^{r}$-semigroup $T(t): Y \mapsto Y, t \geq 0$, $Y$ any metric space, is completely continuous for $t > t_{1}$ and point dissipative, then there exists a global attractor $A$. If $Y$ is a Banach space, then $A$ is connected and if $t_{1} = 0$, then there is an equilibrium point of $T(t)$. 
\end{lemma}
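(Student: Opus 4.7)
The plan is to handle the three assertions separately: first existence of the global attractor $A$, then connectedness in the Banach setting, and finally existence of an equilibrium when $t_{1}=0$. The common thread is that complete continuity of $T(t)$ for $t>t_{1}$ supplies the precompactness needed to extract limits from mere point attraction.

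Starting from point dissipativity, I would fix a bounded set $B_{0}$ attracting every point of $Y$ and enlarge it to a bounded open neighborhood $U$, then upgrade point attraction to \emph{compact-set} attraction: given a compact $K$, every $x\in K$ eventually enters $U$, and by continuity in $x$ plus a finite-subcover argument, some $t_{K}>t_{1}$ has $T(t)K\subset U$ for all $t\geq t_{K}$. Defining
\[
A := \bigcap_{s\geq 0}\overline{\bigcup_{t\geq s} T(t)U},
\]
complete continuity of $T(t)$ for $t>t_{1}$ makes the inner union precompact once $s$ is large, so $A$ is a nested intersection of nonempty compacta, hence nonempty and compact. Invariance $T(t)A=A$ then follows from standard $\omega$-limit set properties combined with continuity of $T(t)$ for each fixed $t$. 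Compact-set attraction upgrades to bounded-set attraction by first pushing any bounded $B$ forward by time $s_{0}>t_{1}$ to the precompact set $\overline{T(s_{0})B}$ and applying the previous step to its closure.

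For connectedness when $Y$ is a Banach space, I would choose $U$ to be a connected open ball containing $B_{0}$. Each $T(t)U$ is connected as the continuous image of a connected set, and the union $\bigcup_{t\geq s}T(t)U$ inherits connectedness because consecutive orbits intersect via continuity in $t$. Taking closures preserves connectedness, and a decreasing intersection of nonempty compact connected sets in a Hausdorff space is connected, giving connectedness of $A$.

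The main obstacle is the equilibrium claim when $t_{1}=0$. The naive attempt — Schauder's theorem applied to $T(1/n)$ on $\overline{\mathrm{conv}}(A)$ — fails because the nonlinear semigroup need not leave this convex hull invariant. My plan is to invoke an asymptotic fixed-point theorem of Horn type: the triple $S_{0}=A$, $S_{1}$ a bounded open neighborhood of $A$ inside $\overline{\mathrm{conv}}(A)$, and $S_{2}=\overline{\mathrm{conv}}(A)$ (compact by Mazur's theorem in the Banach setting) satisfies the required iteration hypothesis because iterates $T(1/n)^{k}$ absorb $S_{1}$ into $S_{0}$ once $k$ is large relative to the attraction rate of $A$. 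This yields for each $n$ a fixed point $x_{n}$ of $T(1/n)$ in $\overline{\mathrm{conv}}(A)$. Extracting a convergent subsequence $x_{n_{k}}\to x^{*}$ via compactness of $\overline{\mathrm{conv}}(A)$ and using joint continuity of the semigroup together with $T(m/n)x_{n}=x_{n}$ for every integer $m$ passes to the limit to give $T(t)x^{*}=x^{*}$ for every $t\geq 0$. The delicate point is verifying the Horn-style iteration hypothesis uniformly in $n$ in the absence of any a priori invariant convex region, which requires a careful choice of $S_{1}$ calibrated to the rate at which $A$ absorbs a neighborhood of itself.
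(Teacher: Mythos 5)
This lemma is stated in the paper without proof: it is quoted verbatim as Theorem 3.4.8 of Hale's monograph \cite{H88} and invoked as a black box (in the proof of Proposition 6.2), so there is no in-paper argument to compare against; the relevant comparison is with Hale's own proof, whose overall route your outline does follow (upgrade point dissipativity to attraction of compact and then bounded sets, realize $A$ as the $\omega$-limit set of an absorbing neighborhood, get connectedness from a ball and a nested intersection of continua, and get the equilibrium from fixed points of $T(1/n)$ plus a diagonal limit). However, two of your steps are asserted in a form that does not work. The first is the passage from point attraction to ``some $t_{K}>t_{1}$ has $T(t)K\subset U$ for all $t\geq t_{K}$.'' Point dissipativity plus continuity gives, for each $x\in K$, a time $t_{x}$ and a neighborhood $V_{x}$ with $T(t_{x})V_{x}\subset U$; it does not give $T(t)V_{x}\subset U$ for all $t\geq t_{x}$, because $U$ is not positively invariant and the return times of nearby orbits need not be synchronized. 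This uniformity in $t$ is precisely the content of Hale's Lemma 3.2.1 and is the hardest part of the theorem. Without it, the precompactness of $\bigcup_{t\geq s}T(t)U$ is also unjustified: complete continuity makes each individual $T(t)U$ precompact, but the union over an unbounded time interval is precompact only after one already knows it is bounded, which point dissipativity alone does not provide.

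The second gap is in the equilibrium step. Horn's theorem requires all three sets $S_{0}\subset S_{1}\subset S_{2}$ to be \emph{convex}, and your choice $S_{0}=A$ is not convex in general; moreover the required inclusion $T(1/n)^{k}(S_{1})\subset S_{0}$ for a block of iterates cannot hold with $S_{0}=A$, since $A$ attracts $S_{1}$ only asymptotically --- the iterates approach $A$ but need not enter it in finite time. The standard repair (the Hale--Lopes asymptotic fixed point theorem, which is what Hale actually cites at this point) replaces $S_{0}$ by the closed convex hull of a suitable finite union of iterates of $S_{1}$, which is compact by Mazur's theorem because each iterate is precompact, and verifies the Horn hypotheses for that convex set rather than for the attractor itself. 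Your concluding limit argument --- extracting $x_{n_{k}}\to x^{*}$ and using $T(m/n_{k})x_{n_{k}}=x_{n_{k}}$ together with joint continuity of $(t,x)\mapsto T(t)x$ --- is correct once the fixed points exist; it is the existence step that needs the corrected convex triple.
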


\begin{lemma}\label{Lemma 3, [SZ01]}
(Lemma 3, \cite{SZ01}) Let $Y$ be a metric space, $\Psi$ a semiflow on $Y$, $Y_{0} \subset Y$ an open set, $\partial Y_{0} = Y \setminus Y_{0}$, $M_{\partial} = \{y \in \partial Y_{0}: \Psi_{t}(y) \in \partial Y_{0}  \hspace{1mm} \forall  \hspace{1mm} t \geq 0 \}$ and $q$ be a generalized distance function for semiflow $\Psi$. Assume that 
\begin{enumerate}
\item $\Psi$ has a global attractor $A$, 
\item there exists a finite sequence $K = \{K_{i}\}_{i=1}^{n}$ of pairwise disjoint, compact and isolated invariant sets in $\partial Y_{0}$ with the following properties
\begin{itemize}
\item $\cup_{y \in M_{\partial}} \omega(y) \subset \cup_{i=1}^{n} K_{i}$,
\item no subset of $K$ forms a cycle in $\partial Y_{0}$, 
\item $K_{i}$ is isolated in $Y$, 
\item $W^{s}(K_{i}) \cap q^{-1}(0,\infty) = \emptyset  \hspace{1mm} \forall  \hspace{1mm} i = 1,\hdots, n$. 
\end{itemize}
\end{enumerate}
Then there exists $\delta > 0$ such that for any compact chain transitive set $L$ that satisfies $L \not\subset K_{i}  \hspace{1mm} \forall  \hspace{1mm} i = 1, \hdots, n$, $\min_{y \in L} q(y) > \delta$ holds. 
\end{lemma}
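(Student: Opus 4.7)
The plan is to argue by contradiction, reducing the lemma to a Butler--McGehee style chain-building argument inside the global attractor $A$. Suppose no such $\delta$ exists; then for each $n\in\mathbb{N}$ there is a compact chain transitive set $L_n$ with $L_n\not\subset K_i$ for every $i$, yet $\min_{y\in L_n}q(y)<1/n$. Because every compact chain transitive set of $\Psi$ lies in the global attractor $A$ granted by hypothesis (1), all $L_n$ sit in the compact set $A$, and passing to a subsequence they converge in the Hausdorff metric to a compact set $L_\infty\subset A$. A direct approximation of $\epsilon$-chains shows that $L_\infty$ is again invariant and chain transitive; continuity of $q$ gives $\min_{y\in L_\infty}q(y)=0$.

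Next I would pick $y^\ast\in L_\infty$ with $q(y^\ast)=0$. The defining properties of a generalized distance function yield $q(\Psi_t(y^\ast))=0$ for all $t\geq 0$, so the forward orbit of $y^\ast$ stays in $\partial Y_0$ and $y^\ast\in M_\partial$. Hypothesis (2a) then places $\omega(y^\ast)\subset\bigcup_i K_i$; since $\omega(y^\ast)$ is itself internally chain transitive while the $K_i$ are pairwise disjoint, compact, invariant and isolated, $\omega(y^\ast)$ is entirely contained in one $K_{i_0}$.

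The key step is to promote the fact that $L_\infty\not\subset K_{i_0}$ (inherited from $L_n\not\subset K_j$ for any $j$, by Hausdorff limit) into a cycle among the $K_i$'s. Inside $L_\infty$, a Butler--McGehee type alternative applies: being a compact invariant set that meets but is not contained in the isolated invariant set $K_{i_0}$, $L_\infty$ contains a full orbit $\sigma_0$ with $\alpha(\sigma_0)\subset K_{i_0}$ and $\omega(\sigma_0)$ contained in some other $K_{i_1}$. If $\sigma_0$ ever enters $Y_0$ at some point $z$, then $z\in W^s(K_{i_1})\cap q^{-1}(0,\infty)$, contradicting the stable-manifold hypothesis; otherwise the whole orbit lies in $\partial Y_0$, and iterating produces a sequence $K_{i_0},K_{i_1},K_{i_2},\ldots$ in the finite family, forcing a repetition that closes a cycle in $\partial Y_0$ and contradicts hypothesis (2b).

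The main obstacle is the rigorous iteration of the Butler--McGehee alternative under merely a semiflow, where backward orbits need not exist globally. The remedy is to work inside the compact invariant attractor $A$, through every point of which passes a full orbit, or to replace the classical alternative by its chain-transitive analogue: build $\epsilon$-chains in $L_\infty$ from $K_{i_k}$ to $K_{i_{k+1}}$ using internal chain transitivity and then let $\epsilon\downarrow 0$ to extract a genuine orbit. The delicate bookkeeping is tracking whether each successive link stays in $\partial Y_0$ (eventual cycle contradiction) or escapes into $Y_0$ (immediate stable-manifold contradiction) throughout the finite iteration.
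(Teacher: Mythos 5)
The paper does not actually prove this statement: it is quoted verbatim as Lemma 3 of Smith--Zhao \cite{SZ01} and used as a black box in the proof of Theorem 2.2, so there is no in-paper argument to compare yours against. Judged on its own merits, your outline has the right overall shape (a Hausdorff-limit compactness argument to obtain the uniform $\delta$, then a Butler--McGehee/acyclicity contradiction), but two specific steps are wrong as written. First, the inference ``the defining properties of a generalized distance function yield $q(\Psi_t(y^{\ast}))=0$ for all $t\geq 0$, so the forward orbit of $y^{\ast}$ stays in $\partial Y_0$ and $y^{\ast}\in M_{\partial}$'' misreads the definition. A generalized distance function satisfies $q(\Psi_t(y))>0$ for all $t>0$ whenever $q(y)>0$ or ($q(y)=0$ and $y\in Y_0$); it gives no forward propagation of the zero set, and $q^{-1}(0)$ need not lie in $\partial Y_0$ (that is precisely what ``generalized'' buys in applications). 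Indeed, if $y^{\ast}\in Y_0$ with $q(y^{\ast})=0$, the axiom forces $q(\Psi_t(y^{\ast}))>0$ for $t>0$. What the axiom does give is \emph{backward} propagation: since $L_{\infty}$ is invariant there is a full orbit through $y^{\ast}$ inside $L_{\infty}$, and every backward point $y_{-t}$ must satisfy $q(y_{-t})=0$ and $y_{-t}\in\partial Y_0$, since otherwise $q(y^{\ast})=q(\Psi_t(y_{-t}))>0$. The object to feed into hypothesis (2a) is therefore the alpha-limit set of that backward orbit (an invariant, internally chain transitive subset of $M_{\partial}$), not $y^{\ast}$ itself.

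Second, ``$L_{\infty}\not\subset K_j$ is inherited from $L_n\not\subset K_j$ by Hausdorff limit'' is not a valid inference: non-containment does not pass to Hausdorff limits, as the $L_n$ could shrink onto a subset of some $K_j$. The conclusion is salvageable, but only by invoking hypothesis (2c): if $L_{\infty}\subset K_j$, then for large $n$ the compact invariant set $L_n$ lies in an isolating neighborhood of $K_j$, hence $L_n\subset K_j$ by isolatedness of $K_j$ in $Y$, contradicting the choice of $L_n$. Beyond these two points, the entire second half of your argument --- the Butler--McGehee alternative for semiflows applied to chain transitive sets and its iteration to produce either a cycle in $\partial Y_0$ or a point of $W^s(K_i)\cap q^{-1}(0,\infty)$ --- is the genuine technical content of the Smith--Zhao result (resting on the chain-transitivity machinery of Hirsch, Smith and Zhao), and you acknowledge rather than supply it; note also that when a connecting orbit enters $Y_0$ at a point $z$ you cannot conclude $q(z)>0$ directly, only that $q(\Psi_t(z))>0$ for $t>0$ with $\Psi_t(z)$ still in the relevant stable set. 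As it stands the proposal is a reasonable roadmap with an incorrect treatment of the generalized distance function and an unsupported limit step, so it does not yet constitute a proof.
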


\begin{lemma}\label{pg. 3, [Z03]}
(pg. 3, \cite{Z03}) Suppose the Kuratowski's measure of non-compactness for any bounded set $B$ of $Y$, any metric space, is denoted by 
\begin{equation*}
\alpha(B) = \inf\{r: B \text{ has a finite cover of diameter } r\}.
\end{equation*}
Firstly, $\alpha(B) = 0$ if and only if $\overline{B}$ is compact.  

Moreover, a continuous mapping $f: Y \mapsto Y, Y$ any metric space, is $\alpha$-condensing ($\alpha$-contraction of order $0 \leq k < 1$) if $f$ takes bounded sets to bounded sets and $\alpha(f(B)) < \alpha(B)$ ($\alpha(f(B)) \leq k \alpha (B)$) for any nonempty closed bounded set $B \subset Y$ such that $\alpha(B) > 0$. Moreover, $f$ is asymptotically smooth if for any nonempty closed bounded set $B \subset Y$ for which $f(B) \subset B$, there exists a compact set $J \subset B$ such that $J$ attracts $B$. 

It is well-known that a compact map is an $\alpha$-contraction of order $0$, and an $\alpha$-contraction or order $k$ is $\alpha$-condensing. Moreover, by Lemma 2.3.5, \cite{H88}, any $\alpha$-condensing maps are asymptotically smooth.  
\end{lemma}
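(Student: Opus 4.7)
The plan is to verify each assertion in the lemma in turn: the characterization of $\alpha(B)=0$, the inclusion chain ``compact $\Rightarrow$ $\alpha$-contraction of order $0$ $\Rightarrow$ $\alpha$-condensing,'' and the reduction of ``$\alpha$-condensing $\Rightarrow$ asymptotically smooth'' to the cited Lemma 2.3.5 of \cite{H88}. All three will be handled directly from the definitions, appealing to the standard fact that in a complete metric space a set has compact closure if and only if it is totally bounded.

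First I would prove $\alpha(B)=0 \iff \overline{B}$ is compact, working in the (complete) setting in which these measures are used. If $\overline{B}$ is compact, then for every $\epsilon>0$ one can cover $\overline{B}$ by finitely many balls of radius $\epsilon/2$, which are sets of diameter at most $\epsilon$, so $\alpha(B)\leq \epsilon$ for all $\epsilon>0$ and hence $\alpha(B)=0$. Conversely, if $\alpha(B)=0$, then for each $n\geq 1$ there is a finite cover $\{C_i^n\}_{i=1}^{N_n}$ of $B$ with $\mathrm{diam}(C_i^n)\leq 1/n$; selecting a point from each nonempty $C_i^n$ yields a finite $1/n$-net for $B$, hence for $\overline{B}$, so $\overline{B}$ is totally bounded and therefore compact.

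Next I would verify the two implications among map classes directly from the definitions. If $f$ is compact, then $f(B)$ is relatively compact for every bounded $B$, so by the previous step $\alpha(f(B))=0=0\cdot \alpha(B)$, displaying $f$ as an $\alpha$-contraction of order $0$. If $f$ is an $\alpha$-contraction of order $k\in[0,1)$ and $B$ is any closed bounded set with $\alpha(B)>0$, then $\alpha(f(B))\leq k\alpha(B)<\alpha(B)$, which is precisely the $\alpha$-condensing condition.

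Finally I would invoke Lemma 2.3.5 of \cite{H88} for the last implication; for completeness I would outline the argument. Given a nonempty closed bounded $B$ with $f(B)\subset B$, set $J:=\bigcap_{n\geq 0}\overline{f^n(B)}$. By monotonicity of $\alpha$ and the $\alpha$-condensing hypothesis, $\alpha(\overline{f^n(B)})$ is a non-increasing sequence; a contradiction argument, using that $\alpha(\overline{f^{n+1}(B)})<\alpha(\overline{f^n(B)})$ whenever the latter is positive, forces $\alpha(\overline{f^n(B)})\to 0$. By the first claim this makes $J$ compact, and a standard nested-sets argument shows $J$ is nonempty and attracts $B$. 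The main subtlety I expect is precisely this last point: the strict inequality in the condensing definition only gives monotone decrease, so proving convergence to zero (and nonemptiness of $J$) requires the careful compactness-type reasoning of Lemma 2.3.5 of \cite{H88} rather than a one-line estimate.
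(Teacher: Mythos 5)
This lemma is quoted background: the paper offers no proof of it, merely recording the definitions from pg.~3 of \cite{Z03} and delegating the only substantive implication (``$\alpha$-condensing implies asymptotically smooth'') to Lemma 2.3.5 of \cite{H88}. Your proposal therefore does strictly more than the paper. The elementary parts are correct: the equivalence $\alpha(B)=0 \iff \overline{B}$ compact via total boundedness is right, and you are correct to flag that the ``only if'' direction needs completeness of $Y$ (harmless here, since every $Y$ to which the lemma is applied is a closed subset of $C([0,1],\mathbb{R}^{4})$); the chain ``compact $\Rightarrow$ $\alpha$-contraction of order $0$ $\Rightarrow$ $\alpha$-condensing'' is immediate from the definitions exactly as you write it. The one place your sketch does not close on its own is the last implication: strict monotone decrease of $\alpha(\overline{f^{n}(B)})$ does not by itself force the limit to be $0$, and nonemptiness of $J=\bigcap_{n\geq 0}\overline{f^{n}(B)}$ is not a routine nested-sets argument until one already knows these measures tend to $0$. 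The genuinely delicate content of Hale's Lemma 2.3.5 is precisely how to get around this --- e.g.\ one first shows each orbit $\gamma^{+}(x)=\{x\}\cup f(\gamma^{+}(x))$ satisfies $\alpha(\gamma^{+}(x))=\alpha(f(\gamma^{+}(x)))$ and hence is precompact, and then separately argues that the resulting compact set attracts $B$. Since you explicitly acknowledge this subtlety and defer to the same citation the paper relies on, your treatment is acceptable and in fact more informative than the paper's; just do not present the ``contradiction argument forces $\alpha(\overline{f^{n}(B)})\to 0$'' line as if strict monotonicity alone delivered it.
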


\begin{lemma}\label{Theorem 3.7, [MZ05]}
(Theorem 3.7, \cite{MZ05}) Let $(M, d)$ be a complete metric space, and $\rho: M \to [0, \infty)$ a continuous function such that $M_{0} = \{x \in M: \rho(x) > 0\}$ is nonempty and convex. Suppose that $T: M \mapsto M$ is continuous, asymptotically smooth, $\rho$-uniformly persistent, $T$ has a global attractor $A$ and satisfies $T(M_{0}) \subset M_{0}$. Then $T: (M_{0}, d)  \mapsto (M_{0}, d)$ has a global attractor $A_{0}$. 
\end{lemma}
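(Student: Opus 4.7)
The plan is to build the global attractor $A_0$ of $T$ restricted to $M_0$ by carving out of the already-assumed global attractor $A$ on $M$ the dynamically persistent portion trapped inside $M_0$, and then to verify compactness, invariance $T(A_0)=A_0$, and attraction of every bounded $B\subset M_0$.

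\emph{Setup and candidate.} By $\rho$-uniform persistence, fix $\eta>0$ with $\liminf_{n\to\infty}\rho(T^n x)\geq\eta$ for every $x\in M_0$. The set $B_\eta:=\rho^{-1}([\eta,\infty))$ is closed in $M$ and lies inside $M_0$, so $A\cap B_\eta$ is compact. I would then define
\begin{equation*}
A_0:=\{\,y\in A : \exists\text{ a full }T\text{-orbit }\{y_n\}_{n\in\mathbb{Z}}\text{ through }y_0=y\text{ with all }y_n\in M_0\,\}.
\end{equation*}
Because $T(A)=A$, every point of $A$ admits at least one full orbit in $A$; the set $A_0$ consists of those whose full orbit can be realized inside $M_0$. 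Invariance $T(A_0)=A_0$ is immediate, since if $y_0\in A_0$ then its full $M_0$-orbit witnesses both $T(y_0)=y_1\in A_0$ and $T(y_{-1})=y_0$ with $y_{-1}\in A_0$.

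\emph{Compactness and attraction.} For compactness I would show $A_0=\omega_M(A\cap M_0)$, the $\omega$-limit set in $M$ of the forward-invariant set $A\cap M_0$. Since $T^n(A\cap M_0)\subset A\cap M_0\subset A$ lies in a compact set, standard semiflow arguments (cf.\ Lemma \ref{Theorem 3.4.8 [H88]}) give $\omega_M(A\cap M_0)$ nonempty, compact, and $T$-invariant, and the pointwise persistence bound, promoted to a uniform bound along the limit set by invariance and continuity of $\rho$, places it in $A\cap B_\eta\subset M_0$; hence $A_0$ is compact in $(M_0,d)$. For attraction, given a bounded $B\subset M_0$, asymptotic smoothness combined with the global attractor $A$ yields $\omega_M(B)\subset A$ compact and $T$-invariant; uniform persistence forces $\omega_M(B)\subset B_\eta$, and $T$-invariance lifts each of its points to a full orbit in $A\cap B_\eta\subset M_0$, so $\omega_M(B)\subset A_0$. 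Since this limit set lies in $M_0$, convergence of $T^n(B)$ in $(M,d)$ coincides with convergence in $(M_0,d)$.

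\emph{Main obstacle.} The principal difficulty is that $M_0$ is open rather than closed in $M$, so the naive candidate $A\cap M_0$ need not be compact: a sequence in it can converge in $A$ to a boundary point where $\rho=0$. Uniform persistence is precisely the mechanism that prevents the $\omega$-limit construction from leaking to $\partial M_0$. A secondary subtlety is that $\rho$-uniform persistence provides only a pointwise-in-$x$ bound $\liminf_n\rho(T^n x)\geq\eta$ rather than a uniform-in-$x$ one, so the bound cannot be applied directly at level $y_0$ of the orbit; this is handled by passing to the $\omega$-limit level, where $T$-invariance plus continuity of $\rho$ upgrade the pointwise bound to $\rho\geq\eta$ everywhere on $\omega_M(A\cap M_0)$. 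Convexity of $M_0$ does not enter the present construction, but is standardly used upstream in acyclicity arguments that produce the uniform persistence hypothesis we take here as given.
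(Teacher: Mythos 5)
This lemma is quoted verbatim from Theorem 3.7 of \cite{MZ05} and is used in the paper as a black box; the paper supplies no proof, so your attempt can only be judged on its own terms. Judged that way, it contains a genuine gap. The two load-bearing claims --- that $\omega_{M}(A\cap M_{0})\subset B_{\eta}$, and, in the attraction step, that $\omega_{M}(B)\subset B_{\eta}$ for every bounded $B\subset M_{0}$ --- do not follow from $\rho$-uniform persistence. The persistence estimate $\liminf_{n}\rho(T^{n}x)\geq\eta$ is pointwise in the initial condition, whereas a point of $\omega_{M}(B)$ is a limit of $T^{n_{k}}x_{k}$ with the base points $x_{k}$ varying; nothing prevents such limits from landing on $\{\rho=0\}$. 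Your proposed remedy (invariance of the $\omega$-limit set plus continuity of $\rho$) is exactly the step that fails: invariance gives you full orbits \emph{inside} $\omega_{M}(B)$, not a positive lower bound for $\rho$ there. A concrete counterexample satisfying every hypothesis of the lemma: $M=[0,1]$, $T(x)=\sqrt{x}$, $\rho(x)=x$, so $M_{0}=(0,1]$ and $A=[0,1]$. Here $\omega_{M}(A\cap M_{0})=\omega_{M}((0,1])=[0,1]\not\subset M_{0}$; the set of points of $A$ admitting a full orbit in $M_{0}$ is all of $(0,1]$ (take $y_{n}=x^{2^{-n}}$, $n\in\mathbb{Z}$), which is invariant but not compact, so your candidate $A_{0}$ is not even admissible; and for the bounded set $B=(0,1/2]$ one again gets $\omega_{M}(B)=[0,1]$. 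The actual global attractor of $T$ on $M_{0}$ is $\{1\}$, and it does not attract the bounded set $(0,1/2]$ --- which also shows that the bounded-set attraction you aim for is strictly stronger than what the lemma asserts and is false in general (recall Definition 3.1 of the paper: a global attractor is an attractor that attracts every \emph{point}).

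The repair is to make the persistence bound act \emph{before} any limit is taken, not after. Work not with $A\cap M_{0}$ but with a set already uniformly separated from $\partial M_{0}$, e.g. $B_{0}:=\{x\in M:\rho(x)\geq\eta/2,\ d(x,A)\leq\varepsilon\}$, which is closed in $M$ (hence complete), bounded, and contained in $M_{0}$. Combining $d(T^{n}x,A)\to 0$ with $\liminf_{n}\rho(T^{n}x)\geq\eta$ shows $T^{n}x\in B_{0}$ for all large $n$ and every $x\in M_{0}$, so $T|_{M_{0}}$ is point dissipative with attracting set $B_{0}$; asymptotic smoothness then produces a compact invariant $A_{0}\subset M_{0}$ attracting every point (and every compact subset) of $M_{0}$ by the standard existence theorem for attractors of point-dissipative asymptotically smooth maps. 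This is essentially the route of Magal and Zhao. Equivalently, the correct candidate is the set of points admitting a full orbit on which $\rho$ stays bounded below by a fixed positive constant --- the maximal compact invariant set in $M_{0}$ --- rather than the set of points with an arbitrary full orbit in $M_{0}$. Your diagnosis that the openness of $M_{0}$ is the main obstacle is exactly right; the error is in believing that uniform persistence closes that gap at the level of $\omega$-limit sets of sets rather than of individual orbits.
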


\begin{remark}\label{Remark 3.10, [MZ05]}
(Remark 3.10, \cite{MZ05}) Let $(M, d)$ be a complete metric space. A family of mappings $\Psi_{t}: M \mapsto M, t \geq 0$, is called a continuous-time semiflow if $(x,t) \mapsto \Psi_{t}(x)$ is continuous, $\Psi_{0} = Id$ and $\Psi_{t}\circ \Psi_{s}  =\Psi_{t+s}$ for $t, s \geq 0$. Lemma \ref{Theorem 3.7, [MZ05]} is valid even if replaced by a continuous-time semiflow $\Psi_{t}$ on $M$ such that $\Psi_{t}(M_{0}) \subset M_{0} \hspace{1mm} \forall  \hspace{1mm} t \geq 0$. 
\end{remark}

\begin{lemma}\label{Theorem 4.7, [MZ05]}
(Theorem 4.7, \cite{MZ05}) Let $M$ be a closed convex subset of a Banach space $(X, \lVert \cdot \rVert)$, $\rho: M \to [0, \infty)$ a continuous function such that $M_{0} = \{x \in M: \rho(x) > 0\}$, where $M_{0}$ is nonempty and convex, and $\Psi_{t}$ a continuous-time semiflow on $M$ such that $\Psi_{t}(M_{0}) \subset M_{0} \hspace{1mm}  \forall  \hspace{1mm} t  \geq 0$. If either $\Psi_{t}$ is $\alpha$-condensing $ \hspace{1mm} \forall  \hspace{1mm} t > 0$ or $\Psi_{t}$ is convex $\alpha$-contracting for $t > 0$, and $\Psi_{t}: M_{0} \mapsto M_{0}$ has a global attractor $A_{0}$, then $\Psi_{t}$ has an equilibrium $a_{0} \in A_{0}$. 

\end{lemma}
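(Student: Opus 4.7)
The plan is to produce, for each fixed $t_{0}>0$, a fixed point of $\Psi_{t_{0}}$ inside $A_{0}$ by a non-compact fixed point theorem on a closed bounded convex $\Psi_{t_{0}}$-invariant subset of $M$, and then extract a full equilibrium of the semiflow by letting $t_{0}\to 0^{+}$ and exploiting the compactness of $A_{0}$ together with joint continuity of $(s,x)\mapsto \Psi_{s}(x)$. Since $A_{0}$ is a global attractor for $\Psi_{t}|_{M_{0}}$, it is nonempty, compact, and strongly invariant: $\Psi_{t}(A_{0})=A_{0}$ for all $t\geq 0$. By Mazur's theorem $C_{0}\triangleq \overline{\operatorname{conv}}(A_{0})$ is compact in $X$, and since $M$ is closed convex and contains $A_{0}$, one has $C_{0}\subset M$. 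For fixed $t_{0}>0$ I would define iteratively $C_{n+1}\triangleq \overline{\operatorname{conv}}(C_{n}\cup \Psi_{t_{0}}(C_{n}))$ and set $C\triangleq \overline{\bigcup_{n}C_{n}}$. Because $A_{0}$ absorbs the bounded set $C_{0}$, the orbit $\bigcup_{k\geq 0}\Psi_{t_{0}}^{k}(C_{0})$ is bounded; combined with continuity of $\Psi_{t_{0}}$, this yields that $C$ is closed, bounded, convex, lies in $M$, and satisfies $\Psi_{t_{0}}(C)\subset C$.

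Next, I would apply the Darbo-Sadovskii fixed point theorem (in the $\alpha$-condensing case) or its convex $\alpha$-contracting analogue of Hale-Lopes type to the continuous self-map $\Psi_{t_{0}}:C\to C$ to obtain a fixed point $e_{t_{0}}\in C$. Since $\Psi_{kt_{0}}(e_{t_{0}})=e_{t_{0}}$ for all $k\in \mathbb{N}$, the forward $\Psi_{t_{0}}$-orbit through $e_{t_{0}}$ is just $\{e_{t_{0}}\}$; once $e_{t_{0}}$ is known to lie in $M_{0}$, the attracting property of $A_{0}$ combined with the closedness of $A_{0}$ forces $e_{t_{0}}\in A_{0}$.

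Applying this with $t_{0}=1/n$ yields a sequence $e_{n}\in A_{0}$ with $\Psi_{1/n}(e_{n})=e_{n}$. Compactness of $A_{0}$ produces a subsequence (still denoted $e_{n}$) with $e_{n}\to e\in A_{0}$. For arbitrary $t>0$ write $t=m_{n}/n+\delta_{n}$ with $\delta_{n}\in [0,1/n)$ so that $\Psi_{m_{n}/n}(e_{n})=e_{n}$; joint continuity of $(s,x)\mapsto \Psi_{s}(x)$ together with $\delta_{n}\to 0$ then gives
\begin{equation*}
\Psi_{t}(e)=\lim_{n\to\infty}\Psi_{\delta_{n}}\bigl(\Psi_{m_{n}/n}(e_{n})\bigr)=\lim_{n\to\infty}\Psi_{\delta_{n}}(e_{n})=e,
\end{equation*}
so $a_{0}\triangleq e\in A_{0}$ is the desired equilibrium.

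The main obstacle I anticipate is ensuring that the fixed points $e_{t_{0}}$ produced by the Darbo-Sadovskii step actually lie in $M_{0}$, since $M_{0}$ is only assumed convex (not closed) while the construction of $C$ involves closed convex hulls and may therefore wander to $\partial M_{0}$. The cleanest remedy is to replace the plain Darbo-Sadovskii application by a Horn-type fixed point theorem for $\alpha$-condensing maps with nested convex sets $S_{0}\subset S_{1}\subset S_{2}$ built around $A_{0}$ inside $M_{0}$: take $S_{1}$ to be an open bounded convex neighborhood of $A_{0}$ in $M_{0}$, use the attraction of $A_{0}$ to guarantee $\Psi_{t_{0}}^{j}(S_{1})\subset S_{0}$ for all sufficiently large $j$, and conclude that the resulting fixed point lies in $S_{0}\subset M_{0}$. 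A secondary technical point is verifying boundedness of the iterated convex-hull construction $C$ under only $\alpha$-condensing (rather than compactness), which is handled by absorption of $C_{0}$ by $A_{0}$ plus uniform-in-$t$ continuity of $\Psi_{t}$ on compacta over a finite time interval.
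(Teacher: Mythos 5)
The paper does not prove this statement: it is quoted verbatim as Theorem 4.7 of \cite{MZ05} (Magal--Zhao) and used as a black box, so there is no in-paper proof to compare against. Judged against the original source, your strategy is essentially the one Magal and Zhao use: obtain, for each $n$, a fixed point $e_{n}\in A_{0}$ of the time-$1/n$ map via an asymptotic fixed point theorem for $\alpha$-condensing (or convex $\alpha$-contracting) maps, note that $e_{n}$ is then fixed by every $\Psi_{m/n}$, extract a convergent subsequence in the compact set $A_{0}$, and pass to the limit using joint continuity of $(s,x)\mapsto\Psi_{s}(x)$ with $t=m_{n}/n+\delta_{n}$, $\delta_{n}\to 0$. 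That final limiting argument is correct as written. You also correctly isolate the one real difficulty in the discrete step --- a plain Darbo--Sadovskii application on $C=\overline{\bigcup_{n}C_{n}}$ gives no control on whether the fixed point lands in $M_{0}$, since $M_{0}$ is only convex (not closed) and $\overline{\operatorname{conv}}(A_{0})$ may meet $M\setminus M_{0}$ --- and your proposed remedy, a Hale--Lopes/Horn-type fixed point theorem on nested convex sets built from a neighborhood of $A_{0}$ inside $M_{0}$, is exactly the device the source relies on (their discrete-time Theorem 4.5 rests on such a result). One caveat you should tighten: your boundedness argument for $C$ invokes ``$A_{0}$ absorbs the bounded set $C_{0}$,'' but with the notion of global attractor in force here (a compact invariant set attracting every point and some neighborhood of itself, per Definition 3.1, not necessarily every bounded set), and with $C_{0}=\overline{\operatorname{conv}}(A_{0})$ possibly not contained in $M_{0}$, that step is not automatic; this is another reason the first variant of your argument does not close on its own and the Horn-type variant is the one that must carry the proof.
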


\section{Key Proposition}
Many authors found Lemma 1 of \cite{LZ11} to be very useful in various proofs (see e.g. Lemma 3.2, \cite{VWZ12}). The key to the proof of our claim is the following extension of Lemma 1 of \cite{LZ11} to consider the case with convection: 
\begin{proposition}
Consider in a spatial domain with $x \in [0,1]$, the following scalar reaction-convection-diffusion equation
\begin{equation}\label{11}
\begin{cases}
\partial_{t} w(x,t) = \overline{D} \partial_{xx}^{2} w(x,t) - \overline{U} \partial_{x} w(x,t) + g(x) - \lambda w(x,t),\\
\overline{D}\partial_{x} w(x,t) - \overline{U} w(x,t) \rvert_{x=0} = \partial_{x} w(x,t) \rvert_{x=1} = 0, \hspace{3mm} w(x,0) = \psi(x),
\end{cases}
\end{equation}
where $\overline{D} > 0, \lambda > 0$, $\overline{U} \geq 0$, and $g(x) > 0$ is a continuous function. Then $\forall \psi \in C([0,1], \mathbb{R}_{+})$, there exists a unique positive steady state $w^{\ast}$ which is globally attractive in $C([0,1], \mathbb{R})$. Moreover, in the case $\overline{U} = 0$ and $g(x) \equiv g$, it holds that $w^{\ast} = \frac{g}{\lambda}$.
\end{proposition}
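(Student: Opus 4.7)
The plan is to extend the classical argument of Lemma 1 of \cite{LZ11} to incorporate the convection term $-\overline{U}\partial_{x}w$. Since (\ref{11}) is linear, the proposition reduces to showing that the generator $\mathcal{A} := \overline{D}\partial_{xx}^{2} - \overline{U}\partial_{x}$, equipped with the stated Robin--Neumann boundary conditions on $C([0,1])$, produces a strongly positive $C_{0}$-semigroup $T(t)$ satisfying the contractive estimate $\lVert T(t)\psi\rVert_{C([0,1])} \leq \lVert \psi\rVert_{C([0,1])}$; the existence of such a strongly positive semigroup itself is essentially contained in the material cited in the Preliminaries.

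First I would establish this contractivity via the parabolic maximum principle applied to $u(\cdot,t) := T(t)\psi$, tracking the spatial maximum $M(t) := \max_{x \in [0,1]} u(x,t)$. At an interior maximizer one has $u_{x} = 0$ and $u_{xx} \leq 0$, so $\partial_{t}u \leq 0$ there; at $x = 1$ the Neumann condition $u_{x}(1,t) = 0$ lets the same argument through. The delicate point is $x = 0$: the Robin condition reads $u_{x}(0,t) = (\overline{U}/\overline{D})u(0,t)$, so a \emph{positive} maximum there would force $u_{x}(0,t) > 0$ whenever $\overline{U} > 0$, contradicting the endpoint maximum condition (and when $\overline{U} = 0$ the condition collapses to Neumann, handled identically). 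Hence positive maxima are nonincreasing, and a symmetric argument handles negative minima, giving $\lVert T(t)\rVert \leq 1$. It then follows that the semigroup generated by $\mathcal{A} - \lambda I$ satisfies $\lVert e^{t(\mathcal{A}-\lambda I)}\rVert \leq e^{-\lambda t}$.

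With this in hand, the remaining steps are quick. Since the spectral bound of $\mathcal{A} - \lambda I$ is at most $-\lambda < 0$, the operator is boundedly invertible on $C([0,1])$, and $w^{\ast} := -(\mathcal{A} - \lambda I)^{-1}g$ is the unique steady state of (\ref{11}); strict positivity $w^{\ast} > 0$ follows from strong positivity of $T(t)$, hence of the resolvent, combined with $g(x) > 0$. For global attractivity I would set $v := w - w^{\ast}$, which satisfies $\partial_{t}v = (\mathcal{A}-\lambda I)v$ with homogeneous Robin--Neumann boundary conditions and initial datum $\psi - w^{\ast}$; the semigroup estimate immediately yields $\lVert v(\cdot,t)\rVert_{C([0,1])} \leq e^{-\lambda t}\lVert \psi - w^{\ast}\rVert_{C([0,1])} \to 0$. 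Finally, when $\overline{U} = 0$ and $g(x) \equiv g$, the constant function $g/\lambda$ trivially solves the steady-state problem with pure Neumann boundary conditions, so by uniqueness $w^{\ast} \equiv g/\lambda$.

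The main obstacle, compared with the purely diffusive Neumann setting of \cite{LZ11}, is the convective Robin condition at $x = 0$, which destroys self-adjointness of $\mathcal{A}$ and obstructs a naive variational or Sturm--Liouville approach. My plan sidesteps this by working entirely at the semigroup level via the maximum principle; in fact the Robin condition actually helps the maximum-principle bookkeeping, since it precludes a positive endpoint maximum at $x = 0$ whenever $\overline{U} > 0$. The only technical care required is to justify the pointwise maximum-principle computations for mild solutions of (\ref{11}), which is routine given the analytic smoothing of $T(t)$ recorded in Section 3.
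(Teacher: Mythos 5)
Your proposal is correct, but it takes a genuinely different route from the paper. The paper treats $P_{t}$ as a monotone dynamical system: it uses the comparison principle (Theorem 7.3.4 and 7.4.1 of \cite{S95}) to get global existence, confinement of $\omega(\psi)$ in an order interval, and strong monotonicity, then verifies strict subhomogeneity of $P_{t}$ by the Freedman--Zhao evolution-operator argument applied to $L(t) = w(t,\alpha\psi) - \alpha w(t,\psi)$, and finally invokes Theorem 2.3.2 of \cite{Z03} to produce the unique globally attractive fixed point $w^{\ast} \gg 0$. You instead exploit the linearity of (\ref{11}) head-on: a maximum-principle contraction bound $\lVert T(t)\rVert \leq 1$ for the convection-diffusion semigroup (where, as you correctly observe, the Robin condition at $x=0$ rules out a positive endpoint maximum there when $\overline{U}>0$), hence $\lVert e^{t(\mathcal{A}-\lambda I)}\rVert \leq e^{-\lambda t}$, bounded invertibility of $\mathcal{A}-\lambda I$, positivity of the resolvent $(\lambda - \mathcal{A})^{-1} = \int_{0}^{\infty}e^{-\lambda t}T(t)\,dt$ giving $w^{\ast} \gg 0$, and explicit exponential decay of $w - w^{\ast}$. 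Your argument is shorter, yields a quantitative rate $e^{-\lambda t}$, in fact gives attractivity for every $\psi \in C([0,1],\mathbb{R})$ (not just nonnegative data), and suffices for every application of the proposition in this paper, all of which have affine reaction terms; the paper's argument is the natural extension of Lemma 1 of \cite{LZ11} and is robust to nonlinear subhomogeneous reaction terms, which is presumably why the authors kept that template. Two small points of care in your route: at the endpoint $x=1$ the conclusion $\partial_{t}u \leq 0$ at a maximizer requires the second-order observation that $u_{x}(1)=0$ together with $u \leq u(1)$ forces $u_{xx}(1) \leq 0$ (justified by parabolic smoothing up to the boundary for $t>0$), and the Dini-derivative bookkeeping for $M(t)=\max_{x}u(x,t)$ should be stated for the one-sided derivative over the (possibly moving) set of maximizers; both are routine.
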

\begin{proof}
The case $\overline{U} = 0$ is treated in Lemma 1 of \cite{LZ11}; we assume $\overline{U} > 0$ here. By continuity we know that there exists 
\begin{equation*}
0 < \min_{x \in [0,1]} g(x) \leq g(x) \leq \max_{x \in [0,1]} g(x) \triangleq \overline{g}\hspace{1mm}  \forall  \hspace{1mm} x \in [0,1].
\end{equation*}
We define $F(x,w) \triangleq g(x) - \lambda w(x,t)$. It is immediate that (e.g. by Lemma \ref{Theorem 7.3.1, Corollary 7.3.2, [S95]} and Remark \ref{pg. 121, [S95]}) $ \hspace{1mm} \forall  \hspace{1mm} \psi \in C([0,1], \mathbb{R}_{+})$, there exists a unique solution $w = w(x,t, \psi) \in C([0,1], \mathbb{R}_{+})$ on some time interval $[0, \sigma), \sigma = \sigma(\psi)$. 

We fix $\psi \in C([0,1], \mathbb{R}_{+})$ so that by continuity there exists $\max_{x \in [0,1]} \psi(x)$. Now if $v \equiv M$ for $M$ sufficiently large such that $M > \max\{\max_{x \in [0,1]} \psi(x), \frac{\overline{g}}{\lambda} \}$, then by Theorem 7.3.4 of \cite{S95} and the blow up criterion from Lemma \ref{Theorem 7.3.1, Corollary 7.3.2, [S95]} and Remark \ref{pg. 121, [S95]}, we immediately deduce the existence of a unique solution on $[0, \infty)$.

Hence, there exists  the solution semiflow $P_{t}$ such that $P_{t}(\psi) = w(t,\psi), \psi \in C([0,1], \mathbb{R}_{+})$. It follows that 
\begin{equation*}
\omega(\psi) \subset \{\varphi: \frac{\min_{x \in [0,1]} g(x)}{\lambda} \leq \varphi \leq \frac{\max_{x \in [0,1]} g(x)}{\lambda}\}
\end{equation*}
by comparison principle (e.g. Theorem 7.3.4 \cite{S95}); we emphasize here again that as stated on pg. 121, \cite{S95}, Theorem 7.3.4 \cite{S95} is applicable to the general second-order differentiation operator such as $\overline{D} \partial_{xx}^{2} - \overline{U} \partial_{x}$. By comparison principle again (e.g. Corollary 7.3.5, Theorem 7.4.1, \cite{S95}), it also follows that 
\begin{equation*}
P_{t}(\psi_{1}) \gg P_{t}(\psi_{2}) \hspace{3mm} \forall  \hspace{1mm} t > 0
\end{equation*}
if $\psi_{1} > \psi_{2}$; this implies that $P_{t}$ is strongly monotone (see Definition \ref{pg. 38, 40, 46, [Z03]}). Moreover, $F$ is strictly subhomogeneous (see Definition \ref{pg. 38, 40, 46, [Z03]}) in a sense that $F(x, \alpha w) > \alpha F(x, w)$ $ \hspace{1mm} \forall  \hspace{1mm} \alpha \in (0,1)$ as $g(x) > 0$. We now follow the idea from pg. 348 \cite{FZ97} to complete the proof. Let $L(t) \triangleq w(t, \alpha \psi) - \alpha w(t,\psi)$ so that 
\begin{align*}
&\partial_{t} L = \overline{D} \partial_{xx}^{2} L - \overline{U} \partial_{x} L + (1-\alpha) g(x) - \lambda L,\\
&L(0) = 0, \hspace{3mm} \overline{D}\partial_{x} L - \overline{U}L \rvert_{x=0} = \partial_{x} L \rvert_{x=1} = 0.
\end{align*}
Let $\Psi(t,s), t \geq s \geq 0$ be the evolution operator of 
\begin{equation}\label{12}
\begin{cases}
\partial_{t}N = \overline{D} \partial_{xx}^{2}N - \overline{U} \partial_{x} N - \lambda N,\\
\overline{D} \partial_{x}N - \overline{U} N \rvert_{x=0} = \partial_{x} N \rvert_{x=1} = 0.
\end{cases}
\end{equation}
Then $\Psi(t, 0)(0) = 0$. Thus, by Theorem 7.4.1 \cite{S95}, which is applicable to the general second-order differentiation operator such as $\overline{D}\partial_{xx}^{2} - \overline{U} \partial_{x}$, we see that $ \hspace{1mm} \forall  \hspace{1mm} \psi > 0, \Psi(t,s) \psi \gg 0$. Hence by Comparison Principle as $g(x) (1-\alpha) \geq 0$, we obtain $ \hspace{1mm} \forall  \hspace{1mm} \psi > 0$, $L(x, t, \psi) \gg 0$. Therefore, $ \hspace{1mm} \forall  \hspace{1mm} \psi > 0, w(t,\alpha \psi) > \alpha w(t,\psi)$; i.e. $P_{t}$ is strictly subhomogeneous (see Definition \ref{pg. 38, 40, 46, [Z03]}).  

By Lemma \ref{Theorem 2.3.2, [Z03]} we now conclude that $P_{t}$ has a fixed point $w^{\ast}(x) \gg 0$ such that $\omega(\psi) = w^{\ast} \in C([0,1], \mathbb{R}_{+})  \hspace{1mm} \forall  \hspace{1mm} \psi \in C([0,1], \mathbb{R}_{+})$.   
\end{proof}

\section{Proof of Theorem 2.1}

Firstly, by Lemma \ref{Theorems 2.1, 2.2, [YW16]}, we know that given $\phi \in X^{+}$, there exists a unique global nonnegative solution to the system (\ref{1}) subjected to (\ref{2}), (\ref{3}). 

Now, from the proof of Theorem 2.3 (1) \cite{YW16}, we know that if we linearize (\ref{1}) about the DFE $(S, I, R, B) = (m^{\ast}, 0, 0, 0)$, we obtain 
\begin{equation}\label{13}
\begin{cases}
\partial_{t} S = D \partial_{xx}^{2}S - m^{\ast} \left(\beta_{1} I +  \frac{\beta_{2}}{K} B\right) -d \, S+ \sigma R,\\
\partial_{t} I = D \partial_{xx}^{2}I  + m^{\ast} \left(\beta_{1}I + \frac{\beta_{2}}{K} B\right)-I(d+\gamma),\\
\partial_{t} R = D \partial_{xx}^{2}R +  \gamma I- R(d+\sigma),\\
\partial_{t} B = D_{4} \partial_{xx}^{2}B - U\partial_{x}B + \xi I + gB -\delta B,
\end{cases}
\end{equation}
so that substituting $(S,I,R,B) = (e^{\lambda t} \psi_{1}(x), e^{\lambda t} \psi_{2}(x), e^{\lambda t} \psi_{3}(x), e^{\lambda t} \psi_{4}(x))$ in (\ref{13}) gives us the eigenvalue problem of 
\begin{align}\label{14}
\begin{cases}
\lambda \psi_{1} = D \partial_{xx}^{2} \psi_{1} - m^{\ast} \left(\beta_{1} \psi_{2} + \frac{\beta_{2}}{K} \psi_{4} \right) - d \psi_{1} + \sigma \psi_{3}, \\
\lambda \psi_{2} = D \partial_{xx}^{2} \psi_{2} + m^{\ast} \left( \beta_{1} \psi_{2} + \frac{\beta_{2}}{K} \psi_{4} \right) - \psi_{2}(d+\gamma),\\
\lambda \psi_{3} = D \partial_{xx}^{2} \psi_{3} + \gamma \psi_{2} - \psi_{3}(d+\sigma),\\
\lambda \psi_{4} = D_{4} \partial_{xx}^{2} \psi_{4} - U \partial_{x} \psi_{4} + \xi \psi_{2} + g \psi_{4} - \delta \psi_{4}.
\end{cases}
\end{align}
We define 
\begin{align}\label{15}
\tilde{\Theta} (\psi_{1}, \psi_{2}, \psi_{3}, \psi_{4}) 
\triangleq
\begin{pmatrix}
D \partial_{xx}^{2} \psi_{1} - m^{\ast} \left(\beta_{1} \psi_{2} + \frac{\beta_{2}}{K} \psi_{4} \right) - d \psi_{1} + \sigma \psi_{3}\\
D \partial_{xx}^{2} \psi_{2} + m^{\ast} \left( \beta_{1} \psi_{2} + \frac{\beta_{2}}{K} \psi_{4} \right) - \psi_{2}(d+\gamma)\\
D \partial_{xx}^{2} \psi_{3} + \gamma \psi_{2} - \psi_{3}(d+\sigma) \\
D_{4} \partial_{xx}^{2} \psi_{4} - U \partial_{x} \psi_{4} + \xi \psi_{2} + g \psi_{4} - \delta \psi_{4}
\end{pmatrix}.
\end{align}
It is shown in the proof of Theorem 2.3 (1) \cite{YW16} that defining 
\begin{align}\label{16}
\Theta 
\begin{pmatrix}
\psi_{2}\\
\psi_{4}
\end{pmatrix}
\triangleq& 
\left( 
\begin{pmatrix}
D \partial_{xx}^{2} - (d+\gamma) & 0 \\
0 & D_{4} \partial_{xx}^{2} - U \partial_{x} - \delta 
\end{pmatrix}
+ 
\begin{pmatrix}
m^{\ast} \beta_{1} & m^{\ast} \frac{\beta_{2}}{K}\\
\xi & g 
\end{pmatrix}
\right)
\begin{pmatrix}
\psi_{2}\\
\psi_{4}
\end{pmatrix}\nonumber\\
=& (\Theta_{2} + \Theta_{1})
\begin{pmatrix}
\psi_{2}\\
\psi_{4}
\end{pmatrix},
\end{align}
we have the spectral bound of $\Theta_{2}$, $s(\Theta_{2})$, to satisfy $s(\Theta_{2}) < 0$. Thus, by Theorem 3.5 \cite{T09}, $s(\Theta)$, the spectral bound of $\Theta$, and hence $s(\tilde{\Theta})$, due to the independence of $\Theta$ from the first and third equations of $\tilde{\Theta}(\psi_{1}, \psi_{2}, \psi_{3}, \psi_{4})$ in (\ref{15}) , has the same sign as 
\begin{equation*}
r(-\Theta_{1} \Theta_{2}^{-1}) - 1 = \mathcal{R}_{0} - 1. 
\end{equation*}
That is, $\mathcal{R}_{0} - 1$ and the principal eigenvalue of $\tilde{\Theta}$, $\lambda = \lambda(m^{\ast})$, have same signs. Now by hypothesis, $\mathcal{R}_{0} < 1$ and hence $\mathcal{R}_{0} - 1 < 0$ so that $\lambda(m^{\ast}) < 0$. This implies 
\begin{equation*}
\lim_{\epsilon \to 0} \lambda (m^{\ast} + \epsilon) = \lambda(m^{\ast}) < 0
\end{equation*}
and therefore, there exists $\epsilon_{0} > 0$ such that $\lambda (m^{\ast} + \epsilon_{0}) < 0$. Let us fix this $\epsilon_{0} > 0$. 

By \cite{YW16} (see (14a), (14b), (14c) of \cite{YW16}), we know that defining $V \triangleq S + I + R$, we obtain 
\begin{equation}\label{17}
\partial_{t} V = D \partial_{xx}^{2} V + b -dV, \hspace{3mm} 
\partial_{x} V \rvert_{x=0,1} = 0, \hspace{3mm} V(x,0) = V_{0}(x)
\end{equation}
where $V_{0}(x) \triangleq \phi_{1}(x) + \phi_{2}(x) + \phi_{3}(x), D > 0, b > 0, d > 0$. By Proposition 4.1 with $\overline{U} = 0, g(x) \equiv b, \lambda = d$, we see that (\ref{17}) admits a unique positive steady state $m^{\ast} = \frac{b}{d}$ which is globally attractive in $C([0,1], \mathbb{R}_{+})$. Therefore, due to the non-negativity of $S, I, R$, for the fixed $\epsilon_{0} > 0$, there exists $t_{0} = t_{0}(\phi)$ such that $ \hspace{1mm} \forall  \hspace{1mm} t \geq t_{0}, x \in [0,1]$, $S(t,x) \leq m^{\ast} + \epsilon_{0}$. Thus, $ \hspace{1mm} \forall \hspace{1mm}  t \geq t_{0}, x \in [0,1]$, 
\begin{align}\label{18}
\partial_{t}I 
\leq D \partial_{xx}^{2} I + \beta_{1} (m^{\ast} + \epsilon_{0}) I + \frac{\beta_{2}B}{K} (m^{\ast} + \epsilon_{0}) - I(d+\gamma) 
\end{align}
by (\ref{1}) as $B \geq 0$ and 
\begin{align}\label{19}
\partial_{t}B 
\leq D_{4} \partial_{xx}^{2}B - U \partial_{x}B + \xi I + B(-\delta) + gB  
\end{align}
by (\ref{1}) as $B^{2}\geq 0, g > 0, K_{B} > 0$. As we will see, it was crucial above how we take these upper bounds carefully. Thus, we now consider for $x \in [0,1], t \geq t_{0}$,  
\begin{equation}\label{20}
\begin{cases}
\partial_{t} V_{2} = D \partial_{xx}^{2}V_{2} + \beta_{1}(m^{\ast} + \epsilon_{0}) V_{2} + \frac{\beta_{2}V_{4}}{K} (m^{\ast} + \epsilon_{0}) - V_{2}(d+\gamma),\\
\partial_{t}V_{4} = D_{4} \partial_{xx}^{2}V_{4} - U \partial_{x}V_{4} + \xi V_{2} + V_{4}(-\delta) + g V_{4},
\end{cases}
\end{equation}
for which its corresponding eigenvalue problem obtained by substituting $(V_{2}, V_{4}) = (e^{\lambda t} \psi_{2}(x), e^{\lambda t} \psi_{4}(x))$ in (\ref{20}) is 
\begin{equation}\label{21}
\begin{cases}
\lambda \psi_{2} = D \partial_{xx}^{2}\psi_{2} + \beta_{1}(m^{\ast} + \epsilon_{0}) \psi_{2} + \frac{\beta_{2}\psi_{4}}{K} (m^{\ast} + \epsilon_{0}) - \psi_{2}(d+\gamma),\\
\lambda \psi_{4} = D_{4} \partial_{xx}^{2}\psi_{4} - U \partial_{x}\psi_{4} + \xi \psi_{2} + \psi_{4}(-\delta) + g \psi_{4}.
\end{cases}
\end{equation} 
We may write this right hand side as 
\begin{align}\label{22}
& \begin{pmatrix}
D \partial_{xx}^{2}\psi_{2}\\
D_{4} \partial_{xx}^{2}\psi_{4} - U \partial_{x}\psi_{4}
\end{pmatrix}
+ 
\begin{pmatrix}
\beta_{1} (m^{\ast} + \epsilon_{0}) - (d+\gamma) & \frac{\beta_{2}}{K} (m^{\ast} + \epsilon_{0}) \\
\xi & g-\delta 
\end{pmatrix}
\begin{pmatrix}
\psi_{2}\\
\psi_{4}
\end{pmatrix}\\
\triangleq& \begin{pmatrix}
D \partial_{xx}^{2}\psi_{2}\\
D_{4} \partial_{xx}^{2}\psi_{4} - U \partial_{x}\psi_{4}
\end{pmatrix}
+ 
M(x) 
\begin{pmatrix}
\psi_{2}\\
\psi_{4}
\end{pmatrix} \nonumber 
\end{align}
so that $M_{ij} \geq 0  \hspace{1mm} \forall  \hspace{1mm} i \neq j$ as $\xi, \frac{\beta_{2}}{K} (m^{\ast} + \epsilon_{0}) > 0$. Moreover, it is also clear that $M$ is irreducible as $M_{12}, M_{21} > 0$ (see Definition \ref{pg. 56, 129 [S95]}). Therefore, by Theorem 7.6.1 \cite{S95}, the eigenvalue problem of (\ref{21}) has a real eigenvalue $\overline{\lambda}$ and its corresponding positive eigenfunction $\psi_{0}$. 

Now we recall that $\lambda(m^{\ast})$ is the principal eigenvalue of (\ref{15}) and make a key observation that the second and fourth equations are independent of the first and third equations and therefore, $\lambda(m^{\ast})$ must also be the eigenvalue of 
\begin{align}\label{23}
&\begin{pmatrix}
D \partial_{xx}^{2} \psi_{2} + m^{\ast} \left( \beta_{1} \psi_{2} + \frac{\beta_{2}}{K} \psi_{4} \right) - \psi_{2}(d+\gamma)\\
D_{4} \partial_{xx}^{2} \psi_{4} - U \partial_{x} \psi_{4} + \xi \psi_{2} + g \psi_{4} - \delta \psi_{4}.
\end{pmatrix}\\
=& 
\begin{pmatrix}
D\partial_{xx}^{2} \psi_{2} \\
D_{4} \partial_{xx}^{2} \psi_{4} - U \partial_{x} \psi_{4}
\end{pmatrix}
+ 
\begin{pmatrix}
m^{\ast} \beta_{1} - (d+\gamma) & m^{\ast} \frac{\beta_{2}}{K}\\
\xi & g - \delta
\end{pmatrix}
\begin{pmatrix}
\psi_{2}\\
\psi_{4}
\end{pmatrix}. \nonumber 
\end{align}
Moreover, we observe that replacing $m^{\ast}$ with $m^{\ast} + \epsilon_{0}$ gives us the eigenvalue problem (\ref{21}). Hence, $\overline{\lambda} = \lambda(m^{\ast} + \epsilon_{0}) < 0$ is the principal eigenvalue of (\ref{21}) which therefore has a solution of 
\begin{equation*}
e^{\lambda (m^{\ast} + \epsilon_{0}) (t - t_{0})} \psi_{0}(x), \hspace{3mm} t \geq t_{0}. 
\end{equation*}
Now we find $\eta > 0$ sufficiently large so that 
\begin{equation*}
(I(x, t_{0}),  B(x, t_{0})) \leq \eta \psi_{0}(x) 
\end{equation*}
which is possible as $\psi_{0}$ is positive. Considering (\ref{9}), we may define 
\begin{subequations}\label{24}
\begin{align}
& F_{2}^{+} \triangleq \beta_{1}(m^{\ast} + \epsilon_{0}) I + \frac{\beta_{2}B}{K} (m^{\ast} + \epsilon_{0}) - I(d+\gamma),\\
& F_{4}^{+} \triangleq \xi I + B(-\delta) + g B, 
\end{align}
\end{subequations}
so that 
\begin{equation*}
\frac{\partial F_{2}^{+}}{\partial B} = \frac{\beta_{2}}{K}(m^{\ast}+ \epsilon_{0}) \geq 0, \hspace{3mm} \frac{\partial F_{4}^{+}}{\partial I} = \xi \geq 0,
\end{equation*}
and hence 
$
\begin{pmatrix}
F_{2}^{+}\\
F_{4}^{+}
\end{pmatrix}$ is cooperative (see Definition \ref{pg. 56, 129 [S95]}). By comparison principle, or specifically Theorem 7.3.4 \cite{S95}, due to (\ref{18}), (\ref{19}), (\ref{24}), we obtain $ \hspace{1mm} \forall  \hspace{1mm} t \geq t_{0}, x \in [0,1]$, 
\begin{align*}
(I(x, t), B(x, t)) \leq \eta e^{\lambda (m^{\ast} + \epsilon_{0}) (t-t_{0})} \psi_{0}(x)
\end{align*}
where $\eta e^{\lambda (m^{\ast} + \epsilon_{0}) (t-t_{0})} \psi_{0}(x)\to 0 $ as $t\to \infty$ because $\lambda(m^{\ast} + \epsilon_{0}) < 0$. 

Thus, the equation for $R$, by (\ref{1}), is asymptotic to 
\begin{equation*}
\partial_{t}V_{3} = D\partial_{xx}^{2}V_{3} - V_{3}(d+\sigma)
\end{equation*}
and hence by the theory of asymptotically autonomous semiflows (see Corollary 4.3 \cite{T92}), we have $\lim_{t\to\infty} R(x, t) = 0$. As we noted already, (\ref{17}) admits a unique positive steady state $m^{\ast}$ which is globally attractive, and we just showed that $ \hspace{1mm} \forall  \hspace{1mm} x \in [0,1], \lim_{t\to\infty} I(x, t) = \lim_{t\to\infty} R(x, t) = 0$, and therefore we obtain $\lim_{t\to\infty} S(x, t) = m^{\ast}$. This completes the proof of Theorem 2.1. 

\section{Proof of Theorem 2.2} 

We need the following proposition:
\begin{proposition}
Let $u(x, t, \phi)$ be the solution of the system (\ref{1}) with $D = D_{1} = D_{2} = D_{3}$, subjected to (\ref{2}), (\ref{3}) such that $u(x, 0, \phi) = \phi \in X^{+}$. If there exists some $t_{0}^{I} \geq 0$ such that $I(\cdot, t_{0}^{I}) \not\equiv 0$, then $I(x, t) > 0  \hspace{1mm} \forall  \hspace{1mm} t > t_{0}^{I}, x \in [0,1]$. Similarly, if there exists some $t_{0}^{R} \geq 0$ such that $R(\cdot, t_{0}^{R}) \not\equiv 0$, then $R(x, t) > 0  \hspace{1mm} \forall  \hspace{1mm} t > t_{0}^{R}, x \in [0,1]$. Finally, if there exists some $t_{0}^{B} \geq 0$ such that $B(\cdot, t_{0}^{B}) \not\equiv 0$, then $B(x, t) > 0  \hspace{1mm} \forall  \hspace{1mm} t > t_{0}^{B}, x \in [0,1]$. 

Moreover, for any $\phi \in X^{+},$ it always holds that $S(x, t) > 0  \hspace{1mm} \forall  \hspace{1mm} x \in [0,1], t > 0$ and 
\begin{equation*}
\liminf_{t\to\infty} S(\cdot, t, \phi) \geq \frac{b}{\beta_{1} 2m^{\ast} + \beta_{2} + d}.
\end{equation*}
\end{proposition}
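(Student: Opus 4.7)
The plan is to split the proposition into two parts. The strict positivity claims for $I$, $R$, $B$, and $S$ follow from the parabolic strong maximum principle applied componentwise, while the asymptotic lower bound on $S$ reduces to a scalar comparison amenable to Proposition 4.1.

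For $I$, I would rewrite its equation in the linear-inequality form
\[
\partial_t I - D\partial_{xx}^2 I - \bigl(\beta_1 S - (d+\gamma)\bigr) I = \beta_2 S \frac{B}{B+K} \geq 0,
\]
which uses only $S, B \geq 0$ from Lemma 3.3. The coefficient of $I$ is bounded on each finite time interval since $S$ is bounded there, so the strong maximum principle together with Hopf's boundary point lemma (applied on $[0,1]\times[t_1, T]$ for arbitrary $t_1 \in (t_0^I, T)$, where the solution is classical by Lemma 3.3) rules out any vanishing interior or boundary point of $I$ subsequent to $I(\cdot, t_0^I) \not\equiv 0$; letting $t_1 \to (t_0^I)^+$ and invoking continuity at $t_0^I$ yields $I(x,t) > 0$ for $t > t_0^I$. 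The same template applied to
\[
\partial_t R - D \partial_{xx}^2 R + (d+\sigma) R = \gamma I \geq 0, \quad
\partial_t B - D_4 \partial_{xx}^2 B + U \partial_x B + \bigl(\delta + gB/K_B - g\bigr) B = \xi I \geq 0
\]
yields positivity of $R$ and $B$. For the unconditional positivity of $S$, the forcing contains $b > 0$, so
\[
\partial_t S - D \partial_{xx}^2 S + \Bigl(\beta_1 I + \beta_2 \tfrac{B}{B+K} + d\Bigr) S = b + \sigma R > 0
\]
forbids $S$ from vanishing anywhere for $t > 0$.

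For the $\liminf$ bound, I would observe that $V \triangleq S + I + R$ satisfies equation (\ref{17}). Proposition 4.1 with $\overline{U} = 0$, $g(x) \equiv b$, $\lambda = d$ gives $V(\cdot, t) \to m^*$ uniformly in $x$, so for any $\epsilon \in (0, m^*)$ there exists $t_1 = t_1(\phi, \epsilon)$ with $I(x,t) \leq V(x,t) \leq m^* + \epsilon < 2m^*$ for all $x \in [0,1]$ and $t \geq t_1$. Combining this with $\tfrac{B}{B+K} \leq 1$ and $\sigma R \geq 0$ in the $S$-equation yields, for $t \geq t_1$,
\[
\partial_t S \geq D \partial_{xx}^2 S + b - \bigl(\beta_1 \cdot 2m^* + \beta_2 + d\bigr) S.
\]
By parabolic comparison against the scalar auxiliary problem (Proposition 4.1 with $\overline{U} = 0$, $g(x) \equiv b$, $\lambda = \beta_1 \cdot 2m^* + \beta_2 + d$), whose globally attractive positive steady state is precisely $\tfrac{b}{\beta_1 \cdot 2m^* + \beta_2 + d}$, we obtain
\[
\liminf_{t \to \infty} S(\cdot, t, \phi) \geq \frac{b}{\beta_1 \cdot 2m^* + \beta_2 + d}
\]
as claimed.

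The main obstacle is the careful invocation of the strong maximum principle for $B$, because of both the advective term $-U \partial_x B$ and the Robin-type boundary at $x = 0$. One must use the version valid for the general second-order operator $D_4 \partial_{xx}^2 - U \partial_x$ (cf. Remark \ref{pg. 121, [S95]}), and then rule out a boundary minimum $B(0, \bar t) = 0$ via Hopf's lemma: such a minimum would force $\partial_x B(0, \bar t) > 0$, which is incompatible with $D_4 \partial_x B - U B\rvert_{x=0} = 0$.
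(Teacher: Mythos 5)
Your proposal is correct, and the two halves line up with the paper's: the $\liminf$ bound is obtained exactly as in the paper (the $V=S+I+R$ equation \eqref{17}, Proposition 4.1 to get $I\le 2m^{\ast}$ eventually, then comparison with the scalar problem whose steady state is $b/(\beta_{1}2m^{\ast}+\beta_{2}+d)$). For the positivity claims the organization differs: you apply the strong maximum principle and Hopf's lemma \emph{directly} to $I,R,B,S$, viewed as solutions of linear scalar parabolic equations with bounded variable coefficients and nonnegative forcing, whereas the paper first manufactures constant-coefficient auxiliary subsolutions $V_{2},V_{3},V_{4},V_{1}$ (by discarding the nonnegative coupling terms), proves \emph{their} positivity by the maximum principle, and then invokes the comparison principle (Theorem 7.3.4 of \cite{S95}) to transfer positivity to $I,R,B,S$. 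Your route is more economical and buys two small improvements: the $S$-positivity argument via $\partial_{t}S+LS=b+\sigma R>0$ works for all $t>0$ without first waiting until $I\le 2m^{\ast}$ (the paper's inequality \eqref{31} only holds for $t\ge t_{1}$, yet its conclusion is asserted for all $t>0$), and your $B$-argument does not need $g<\delta$, which the paper's detour through the auxiliary equation \eqref{30} invokes even though that hypothesis is absent from the statement of the proposition. The price is a technical caveat you should make explicit: your zeroth-order coefficients, e.g.\ $(d+\gamma)-\beta_{1}S$ and $\delta+gB/K_{B}-g$, need not be nonnegative, while the maximum principle the paper cites (Theorem 7.1.12 of \cite{E98}) assumes $c\ge 0$; either perform the standard substitution $w=e^{-\mu t}u$ with $\mu$ large, or move the nonnegative terms $\beta_{1}SI$ and $gB$ to the forcing side as the paper effectively does. (Also, the nonnegativity and classical regularity you use are Lemma 3.2, not Lemma 3.3.)
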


\begin{proof}
We observe that by (\ref{1}), 
\begin{align}\label{25}
\partial_{t} I 
\geq D \partial_{xx}^{2} I - I(d+\gamma).
\end{align}
\begin{align}\label{26}
\partial_{t} R 
\geq  D \partial_{xx}^{2}R - R(d+\sigma).
\end{align}
Thus, we consider 
\begin{align}\label{27}
\begin{cases}
\partial_{t}V_{2} = D \partial_{xx}^{2}V_{2} - V_{2}(d+\gamma) \triangleq D \partial_{xx}^{2}V_{2}  + \tilde{F}_{2},\\
\partial_{x}V_{2}(x,t) \rvert_{x=0,1} = 0,
\end{cases}
\end{align}
\begin{align}\label{28}
\begin{cases}
\partial_{t}V_{3} = D\partial_{xx}^{2}V_{3} - V_{3}(d+\sigma) \triangleq D\partial_{xx}^{2}V_{3} + \tilde{F}_{3},\\
\partial_{x}V_{3}(x,t) \rvert_{x=0,1} = 0,
\end{cases}
\end{align}
such that $V_{2}(\cdot, t_{0}^{I}) \not\equiv 0, I(\cdot, t_{0}^{I}) \geq V_{2}(\cdot, t_{0}^{I})$, and $V_{3}(\cdot, t_{0}^{R}) \not\equiv 0, R(\cdot, t_{0}^{R}) \geq V_{3}(\cdot, t_{0}^{R})$ respectively. By Lemma \ref{Theorem 7.3.1, Corollary 7.3.2, [S95]}, the solutions to (\ref{27}), (\ref{28}) exist locally in time. For both systems (\ref{27}), (\ref{28}), we may repeat the argument in the proof of Proposition 4.1 for the system (\ref{11}) at $\overline{U} = 0, g(x) \equiv 0, \lambda = d+\gamma, \lambda = d+\sigma$ respectively to obtain the sup-norm bounds of both $V_{2}, V_{3}$; therefore, these solutions exist globally in time by the blowup criterion from Lemma \ref{Theorem 7.3.1, Corollary 7.3.2, [S95]}.

Now since $x \in [0,1]$, a one-dimensional space, we may denote 
\begin{equation*}
LV_{2} \triangleq - D \partial_{xx}^{2}V_{2} + (d+\gamma) V_{2}
\end{equation*}
so that 
\begin{equation*}
\partial_{t}V_{2} + L V_{2} = 0 \text{ in } [0,1] \times (0, T], \hspace{1mm} \forall  \hspace{1mm} T> 0
\end{equation*}
by (\ref{27}). Therefore, if $V_{2}(x^{\ast}, t^{\ast}) = 0$ for some $(x^{\ast}, t^{\ast}) \in (0,1) \times (t_{0}^{I}, T]$, then it has a nonpositive minimum in $[0,1] \times [t_{0}^{I}, T]$ and therefore, $V_{2}$ is a constant on $(0,1) \times (0, t^{\ast}]$ by Maximum Principle (see e.g. Theorem 7.1.12, pg. 367 \cite{E98}). Hence as $V_{2}(x^{\ast}, t^{\ast}) = 0$ for $x^{\ast} \in (0,1)$, we must have $V_{2} (\cdot, \cdot) \equiv 0$ on $(0,1) \times (0, t^{\ast}]$. Since $t^{\ast} \in (t_{0}^{I}, T]$, this implies $V_{2}(\cdot, t_{0}^{I}) \equiv 0$ on $(0,1)$, and hence by continuity in $x$, on $[0,1]$. This is a contradiction to $V_{2}(\cdot, t_{0}^{I}) \not\equiv 0$. 

Therefore, we must have $V_{2}(x,t) > 0 \hspace{1mm} \forall  \hspace{1mm} (x,t) \in (0,1) \times (t_{0}^{I}, T]$ and hence $V_{2}(x,t) > 0  \hspace{1mm} \forall  \hspace{1mm} t > t_{0}^{I}, x \in (0,1)$ due to the arbitrariness of $T > 0$. By Comparison Principle (e.g. Theorem 7.3.4 \cite{S95}), we conclude that due to (\ref{25}),
\begin{equation*}
I(\cdot, t) \geq V_{2}(\cdot, t) > 0 \hspace{3mm} \forall  \hspace{1mm} t > t_{0}^{I}, x \in (0,1). 
\end{equation*}
Making use of the boundary values in (\ref{3}), we conclude that $I(\cdot, t) > 0  \hspace{1mm} \forall  \hspace{1mm} t > t_{0}^{I}, x \in [0,1]$. 

The proof that $R(\cdot, t) > 0  \hspace{1mm} \forall  \hspace{1mm} t > t_{0}^{R}, x \in (0,1)$ is done very similarly. We may denote  
\begin{equation*}
LV_{3} \triangleq - D \partial_{xx}^{2}V_{3} + (d+\sigma) V_{3} 
\end{equation*}
so that 
\begin{equation*}
\partial_{t} V_{3} + LV_{3} = 0 \hspace{3mm} \text{ in } [0,1] \times (0, T] \hspace{3mm} \forall  \hspace{1mm} T > 0
\end{equation*}
by (\ref{28}). An identical argument as in the case of $V_{2}$ using Maximum Principle (e.g. Theorem 7.1.12, pg. 367 \cite{E98}) deduces that $V_{3}(x,t) > 0 \hspace{1mm} \forall  \hspace{1mm} (x,t) \in (0,1) \times (t_{0}^{R}, T]$ and hence $V_{3}(x,t) > 0  \hspace{1mm} \forall  \hspace{1mm} t > t_{0}^{R}, x \in (0,1)$ due to the arbitrariness of $T > 0$. By Comparison Principle (e.g. Theorem 7.3.4 \cite{S95}), we conclude that due to (\ref{26})
\begin{equation*}
R(\cdot, t) \geq V_{3}(\cdot, t) > 0 \hspace{3mm} \forall  \hspace{1mm} t > t_{0}^{R}, x \in (0,1).
\end{equation*}
Relying on the boundary values in (\ref{3}) allows us to conclude that $R(\cdot, t) > 0  \hspace{1mm} \forall  \hspace{1mm} t > t_{0}^{R}, x \in [0,1]$. 

Finally, we fix $t_{0}^{B}$ such that $B(\cdot, t_{0}^{B}) \not\equiv 0$ on $[0,1]$ and then $t> t_{0}^{B}$ arbitrary. We know $B$ exists globally in time due to Lemma \ref{Theorems 2.1, 2.2, [YW16]} and thus fix $T > t_{0}^{B}$ so that $t \in [0, T]$. Then by continuity of $B$ in $(x,t) \in [0,1] \times [0, T],$ there exists $M \triangleq \max_{(x,t) \in [0,1] \times [0, T]} B(x,t)$. 

Now $\forall  \hspace{1mm} (x,t) \in [0,1] \times [0, T]$, 
\begin{align}\label{29}
\partial_{t}B \geq D_{4} \partial_{xx}^{2}B - U \partial_{x}B + (g-\delta) B - \frac{gMB}{K_{B}} 
\end{align}
by (\ref{1}). Thus, we consider 
\begin{equation}\label{30}
\begin{cases}
\partial_{t}V_{4} = D_{4} \partial_{xx}^{2}V_{4}  -U \partial_{x}V_{4} + (g - \delta - \frac{gM}{K_{B}}) V_{4} \triangleq D_{4} \partial_{xx}^{2} V_{4} - U \partial_{x}V_{4} + \tilde{F}_{4},\\
D_{4} \partial_{x} V_{4}(x,t) - U V_{4}(x,t) \rvert_{x=0} = \partial_{x}V_{4}(x,t) \rvert_{x=1} = 0, 
\end{cases}
\end{equation}
such that $V_{4}(\cdot, t_{0}^{B}) \not\equiv 0, B_{4}(\cdot, t_{0}^{B}) \geq V_{4}(\cdot, t_{0}^{B})$.

It follows that the solution $V_{4}$ exists locally in time by Lemma \ref{Theorem 7.3.1, Corollary 7.3.2, [S95]}, Remark \ref{pg. 121, [S95]}. Again, repeating the argument in the proof of Proposition 4.1 for the system (\ref{11}) at $\overline{U} = U, g(x) \equiv 0, \lambda = \frac{gM}{K_{B}} + \delta - g > 0$ due to the hypothesis that $g < \delta$ leads to the sup-norm bound so that the solution exists globally in time by the blowup criterion of Lemma \ref{Theorem 7.3.1, Corollary 7.3.2, [S95]}. Now we may denote 
\begin{equation*}
LV_{4} \triangleq -D_{4} \partial_{xx}^{2} V_{4} + U \partial_{x} V_{4} + \left(\frac{gM}{K_{B}} + \delta - g\right) V_{4} 
\end{equation*}
where $\frac{gM}{K_{B}} + \delta - g \geq \delta - g > 0$ by the hypothesis so that 
\begin{equation*}
\partial_{t}V_{4} + LV_{4} = 0 \hspace{3mm} \text{ in } [0,1] \times (0, T].
\end{equation*}
Therefore, if $V_{4}(x^{\ast}, t^{\ast}) = 0$ for some $(x^{\ast}, t^{\ast}) \in (0,1) \times (t_{0}^{B}, T]$, then it has a nonpositive minimum in $[0,1] \times [t_{0}^{B}, T]$ and hence $V_{4}$ is a constant on $(0,1) \times (0, t^{\ast}]$ by Maximum Principle (e.g. Theorem 7.1.12, pg. 367, \cite{E98}). Hence, as $V_{4}(x^{\ast}, t^{\ast}) = 0$ for $x^{\ast} \in (0,1)$, we must have $V_{4}(\cdot, \cdot) \equiv 0$ on $(0,1) \times (0, t^{\ast}]$. Since $t^{\ast} \in (t_{0}^{B}, T]$, this implies that $V_{4}(\cdot, t_{0}^{B}) \equiv 0$ on $(0,1)$ and hence by continuity in $x$, on $[0,1]$. But this contradicts that $V_{4}(\cdot, t_{0}^{B}) \not\equiv 0$. 

Therefore, we must have $V_{4}(x,t) > 0 \hspace{1mm} \forall \hspace{1mm}  (x,t) \in (0,1) \times (t_{0}^{B}, T]$. By Comparison Principle (e.g. Theorem 7.3.4 \cite{S95}), we conclude that due to (\ref{29})
\begin{equation*}
B(\cdot, t) \geq V_{4}(\cdot, t) > 0 \hspace{3mm} \forall  \hspace{1mm} t \in (t_{0}^{B}, T], x \in (0,1).
\end{equation*}
We conclude that by arbitrariness of $T > t_{0}$ and arbitrariness of $t \in [t_{0}^{B}, T]$, this inequality holds for all $t > t_{0}^{B}$. Making use of the boundary values in (\ref{3}) allows us to conclude that $B(\cdot, t) > 0  \hspace{1mm} \forall  \hspace{1mm} t > t_{0}^{B}, x \in [0,1]$. 

Finally, from the proof of Theorem 2.1, specifically due to (\ref{17}) and an application of Proposition 4.1, we know that there exists $t_{1} = t_{1}(\phi)$ such that $\forall  \hspace{1mm} x \in [0,1], t \geq t_{1}$, $I(x,t,\phi) \leq 2m^{\ast}$. Thus, from (\ref{1}) $\forall  \hspace{1mm} x \in [0,1], t \geq t_{1}$, 
\begin{align}\label{31}
\partial_{t}S 
\geq D \partial_{xx}^{2} S + b - S (\beta_{1} 2m^{\ast} + \beta_{2} + d).
\end{align}	
Hence, we consider
\begin{equation}\label{32}
\begin{cases}
\partial_{t}V_{1} = D \partial_{xx}^{2} V_{1} + b - V_{1} (\beta_{1} 2m^{\ast} + \beta_{2} + d) \triangleq D \partial_{xx}^{2}V_{1} + \tilde{F}_{1},\\
\partial_{x}V_{1}(x,t) \rvert_{x=0,1} = 0.
\end{cases}
\end{equation}	
Firstly, by Lemma \ref{Theorem 7.3.1, Corollary 7.3.2, [S95]}, the existence of the unique nonnegative local solution follows. Again, repeating the argument in the proof of Proposition 4.1 for the system (\ref{11}) at $\overline{U} = 0, g(x) \equiv 0, \lambda = \beta_{1}2m^{\ast} + \beta_{2} + d$ leads to the sup-norm bound so that the global existence of the solution follows due to the blowup criterion of Lemma \ref{Theorem 7.3.1, Corollary 7.3.2, [S95]}. Now we may denote by 
\begin{equation*}
LV_{1} \triangleq - D \partial_{xx}^{2}V_{1} + (\beta_{1} 2m^{\ast} + \beta_{2} +d) V_{1}
\end{equation*}
so that $\partial_{t}V_{1} + LV_{1} = b \geq 0$. Therefore, if $V_{1}(x^{\ast}, t^{\ast}) = 0$ for some $(x^{\ast}, t^{\ast}) \in (0,1) \times (0, T]$ for any $T> 0$, then $V_{1}$ attains a nonpositive minimum over $[0,1] \times [0, T]$ at $(x^{\ast}, t^{\ast}) \in (0,1) \times (0, T]$, then by Maximum Principle (e.g. Theorem 7.1.12, pg. 367, \cite{E98}), $V_{1} \equiv c$ on $(0,1) \times (0, t^{\ast}]$. Since $V_{1}(x^{\ast}, t^{\ast}) = 0$, this implies $V_{1} \equiv 0$ on $(0,1) \times (0, t^{\ast}]$.  But by (\ref{32}), we see that this implies $ 0 =b$ which is a contradiction because $b > 0$. Therefore, we must have $V_{1}(x,t, \phi) > 0  \hspace{1mm} \forall  \hspace{1mm} x \in [0,1], t \in [0,T]$ and hence by the arbitrariness of $T > 0$, $ \hspace{1mm} \forall  \hspace{1mm} t > 0$. By (\ref{31}) and Comparison Principle (e.g. Theorem 7.3.4 \cite{S95}), we conclude that $ \hspace{1mm} \forall  \hspace{1mm} t > 0, x \in [0,1]$, 
\begin{equation*}
S(x,t,\phi) \geq V_{1}(x,t,\phi) > 0.
\end{equation*}
Finally, since (\ref{32}) has a unique positive steady state of $\frac{b}{\beta_{1} 2m^{\ast} + \beta_{2} + d}$ by Proposition 4.1 with $\overline{U} = 0, g(x) \equiv b, \lambda = \beta_{1} 2 m^{\ast} + \beta_{2} + d$, we obtain 
\begin{equation*}
\liminf_{t\to\infty} S(\cdot, t, \phi) \geq \frac{b}{\beta_{1} 2m^{\ast} + \beta_{2} + d}.
\end{equation*}	
\end{proof}

We also need the following proposition: 

\begin{proposition}
Suppose $D = D_{1} = D_{2} = D_{3}, \phi \in X^{+}$ and $g < \delta$. Then the system (\ref{1}) subjected to (\ref{2}), (\ref{3}) admits a unique nonnegative solution $u(x, t,\phi)$ on $[0,1]\times [0,\infty)$, and its solution semiflow $\Phi_{t}: X^{+} \mapsto X^{+}$ has a global compact attractor A. 
\end{proposition}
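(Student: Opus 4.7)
The plan is to verify the two hypotheses of Lemma \ref{Theorem 3.4.8 [H88]}: complete continuity of $\Phi_t$ for every $t>0$, and point dissipativity on $X^{+}$. Global existence, uniqueness, and the semiflow property of $\Phi_t$ on $X^{+}$ under $D=D_1=D_2=D_3$ are already granted by Lemma \ref{Theorems 2.1, 2.2, [YW16]}, so only the two dynamical properties remain.

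Complete continuity will be obtained from conclusion (4) of Lemma \ref{Theorem 7.3.1, Corollary 7.3.2, [S95]}: combined with the compactness of each linear semigroup $T_i(t)$ for $t>0$ noted on pg.~121 of \cite{S95} and the local Lipschitz behavior of the nonlinearities $F_i$ in (\ref{10}), it suffices to show that for every bounded closed $Z\subset X^{+}$ the orbit $\bigcup_{t\in[0,t_0]}\Phi_t(Z)$ is bounded in $X^{+}$. This reduces the whole proof to producing uniform sup-norm bounds along orbits, which simultaneously serves point dissipativity.

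For the sup-norm bounds I would work component-wise via two scalar comparison problems. Adding the first three equations of (\ref{1}) yields (\ref{17}) for $V\triangleq S+I+R$ with Neumann data. Proposition 4.1, applied with $\overline{U}=0$, $g(x)\equiv b$, $\lambda = d$, furnishes the unique globally attractive steady state $m^{\ast}=b/d$, and the constant super-solution argument used inside that proof yields, for every bounded $Z\subset X^{+}$, a uniform bound $V(\cdot,t)\leq C(Z)$ on $[0,t_0]$ together with the absorbing bound $\limsup_{t\to\infty}V(\cdot,t)\leq m^{\ast}+1$ after some $t_1(\phi)$. Nonnegativity of $S,I,R$ transfers this to the three components separately, in particular giving $I\leq m^{\ast}+1$ for $t\geq t_1(\phi)$.

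The hypothesis $g<\delta$ enters at the $B$ equation: dropping the non-positive quadratic term $-gB^{2}/K_{B}$ and inserting the bound on $I$ gives
\begin{equation*}
\partial_{t}B \leq D_{4}\partial_{xx}^{2}B - U\partial_{x}B + \xi(m^{\ast}+1) - (\delta-g)B \quad \text{for } t\geq t_1,
\end{equation*}
to which Proposition 4.1 with $\overline{U}=U$, $g(x)\equiv \xi(m^{\ast}+1)$, $\lambda=\delta-g>0$ applies, producing a unique globally attractive positive steady state $w^{\ast}$. The comparison principle (Theorem 7.3.4 of \cite{S95}, applicable to $D_{4}\partial_{xx}^{2}-U\partial_{x}$ by the remark on pg.~121 of \cite{S95}) then gives $\limsup_{t\to\infty}\|B(\cdot,t)\|_{C([0,1])}\leq \|w^{\ast}\|_{C([0,1])}$, and the same constant super-solution argument supplies the uniform bound on $[0,t_0]$. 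Collecting the four component bounds produces a bounded absorbing set in $X^{+}$ (point dissipativity) and the uniform orbit bound needed for complete continuity; Lemma \ref{Theorem 3.4.8 [H88]} then delivers the global compact attractor $A$. The main obstacle is exactly this last estimate: the logistic source $gB(1-B/K_{B})$ contributes a $+gB$ term at small $B$, so without the structural condition $g<\delta$ the linear part of the $B$ equation is not dissipative and the reduction to Proposition 4.1 would fail.
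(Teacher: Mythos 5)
Your proposal is correct and follows essentially the same route as the paper: global existence from Lemma \ref{Theorems 2.1, 2.2, [YW16]}, point dissipativity via the $V=S+I+R$ equation (\ref{17}) and Proposition 4.1, then a comparison for $B$ using $g<\delta$ to kill the logistic growth, and finally Lemma \ref{Theorem 3.4.8 [H88]} applied to the compact semiflow. The only differences are cosmetic (the paper uses the absorbing bound $2m^{\ast}$ and takes $g(x)=\xi 2m^{\ast}+x$ in Proposition 4.1, and asserts compactness of $\Phi_{t}$ directly rather than routing it through conclusion (4) of Lemma \ref{Theorem 7.3.1, Corollary 7.3.2, [S95]}).
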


\begin{proof}
Firstly, by Lemma \ref{Theorems 2.1, 2.2, [YW16]}, the unique nonnegative solution $u(t,\phi)$ exists on $[0,\infty)$. As already used in the proof of Theorem 2.1, we know that (\ref{17}) admits a unique positive steady state $m^{\ast} = \frac{b}{d}$. This implies that, as $S, I, R \geq 0$, there exists $t_{1} > 0$ such that $\sf[ \hspace{1mm} \forall  \hspace{1mm} t \geq t_{1}, S(t), I(t), R(t) \leq 2 m^{\ast}]$. Therefore, $ \hspace{1mm} \forall  \hspace{1mm} t \geq t_{1}$, 
\begin{align*}
\partial_{t} B 
\leq D_{4} \partial_{xx}^{2}B - U \partial_{x}B + \xi 2 m^{\ast} + (g-\delta) B 
\end{align*}
by (\ref{1}). Thus, by Proposition 4.1 with $\overline{U} > 0, g(x) = \xi 2m^{\ast} + x, \lambda = \delta - g$, we see that there exists $t_{2} = t_{2}(\phi) > 0$ large so that $B(t,\phi) \leq \frac{\xi 4 m^{\ast} + 1}{\delta - g}$; here we used the hypothesis that $g < \delta$. Hence, the solution semiflow $\Phi_{t}$ is point dissipative (see Definition \ref{pg. 2, 3, 11 [Z03]}). 

As noted in the Preliminaries section, $T$ is compact. From the definitions of (\ref{10}), it is clear that $F = (F_{1}, F_{2}, F_{3}, F_{4})$ is continuously differentiable and therefore locally Lipschitz in $C([0,T], X^{+})$. Moreover, our diffusion operators including the convection operator $T(t)$ is analytic (see the Preliminaries Section) and thus strongly continuous. It follows that the solution semiflow $\Phi_{t}: X^{+} \mapsto X^{+}$ is compact $ \hspace{1mm} \forall  \hspace{1mm} t > 0$. Therefore, by Lemma \ref{Theorem 3.4.8 [H88]}, we may conclude that $\Phi_{t}$ has a global compact attractor. 
\end{proof}

Now we let 
\begin{equation*}
\mathbb{W}_{0} \triangleq \{\psi = (\psi_{1}, \psi_{2}, \psi_{3}, \psi_{4}) \in X^{+}: \psi_{2}(\cdot) \not\equiv 0 \text{ or } \psi_{4}(\cdot) \not\equiv 0 \}
\end{equation*}
and observe that $\mathbb{W}_{0} \subset X^{+}$ is an open set. Moreover, we define 
\begin{align*}
\partial \mathbb{W}_{0} \triangleq& X^{+} \setminus \mathbb{W}_{0}\\
=& \{\psi = (\psi_{1}, \psi_{2}, \psi_{3}, \psi_{4}) \in X^{+}: \psi_{2} (\cdot) \equiv 0 \text{ and } \psi_{4}(\cdot) \equiv 0 \}.
\end{align*}
By Proposition 6.1, it follows that $\Phi_{t}(\mathbb{W}_{0}) \subset \mathbb{W}_{0}  \hspace{1mm} \forall  \hspace{1mm} t \geq 0$ because if $\psi \in \mathbb{W}_{0}$ is such that $\psi_{2}(\cdot) \not\equiv 0$, then by Proposition 6.1, $I(x, t, \psi) > 0 \hspace{1mm} \forall \hspace{1mm} x \in [0,1], t > 0$ whereas if $\psi \in \mathbb{W}_{0}$ is such that $\psi_{2} \equiv 0$, then by the definition of $\mathbb{W}_{0}$ we must have $\psi_{4}(\cdot) \not\equiv 0$ so that by Proposition 6.1, $B(x, t, \psi) > 0 \hspace{1mm} \forall \hspace{1mm} x \in [0,1], t > 0$.

We now define 
\begin{equation*}
M_{\partial} \triangleq \{ \psi \in \partial \mathbb{W}_{0}: \Phi_{t}(\psi) \in \partial \mathbb{W}_{0} \hspace{1mm} \forall  \hspace{1mm} t \geq 0 \}
\end{equation*}
and let $\omega(\phi)$ be the $\omega$-limit set of the orbit $\gamma^{+} (\phi) \triangleq \{\Phi_{t} (\phi)\}_{t \geq 0}$. 

\begin{proposition}
Suppose $D = D_{1} = D_{2} = D_{3}$ and $g < \delta$. For any $\phi \in X^{+}$, let $u(x,t,\phi)$ be the unique nonnegative solution to the system (\ref{1}) subjected to (\ref{2}), (\ref{3}) such that $u(x,0,\phi) = \phi$. Then $\forall  \hspace{1mm} \psi \in M_{\partial}, \omega(\psi) = \{(m^{\ast}, 0, 0, 0)\}$. 
\end{proposition}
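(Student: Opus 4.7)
The plan is to exploit the very definition of $M_{\partial}$ to collapse the full system to a linear two-component subproblem. Since $\Phi_{t}(\psi) \in \partial \mathbb{W}_{0}$ for every $t \geq 0$, the identities $I(\cdot, t, \psi) \equiv 0$ and $B(\cdot, t, \psi) \equiv 0$ hold for all $t \geq 0$ directly from the definition of $\partial \mathbb{W}_{0}$. Substituting these into (\ref{1}) annihilates the nonlinear transmission and shedding terms, and the surviving dynamics reduces to
\begin{equation*}
\partial_{t} R = D \partial_{xx}^{2} R - (d+\sigma) R, \qquad \partial_{t} S = D \partial_{xx}^{2} S + b - dS + \sigma R,
\end{equation*}
both subject to the Neumann boundary data in (\ref{3}).

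The second step is to drive $R$ to zero. The $R$ equation is a homogeneous linear parabolic equation with strictly negative zeroth-order coefficient $-(d+\sigma)$; a standard maximum-principle argument on the sup norm (the quantity $e^{(d+\sigma)t} R$ cannot attain a positive increase at an interior maximum, while the Neumann condition rules out a boundary maximum being the site of the sup) yields the explicit decay bound $\|R(\cdot, t)\|_{C([0,1])} \leq \|\psi_{3}\|_{C([0,1])} e^{-(d+\sigma) t}$. In particular, $R(\cdot, t) \to 0$ uniformly in $x$ as $t \to \infty$.

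The third step is to drive $S$ to $m^{\ast}$. With $\sigma R(\cdot, t) \to 0$ uniformly, the $S$ equation is asymptotic to the autonomous problem $\partial_{t} V = D \partial_{xx}^{2} V + b - dV$ with Neumann data, which by Proposition 4.1 (applied with $\overline{U} = 0$, $g(x) \equiv b$, $\lambda = d$) admits the unique globally attractive positive steady state $m^{\ast} = b/d$. Invoking Corollary 4.3 of \cite{T92} on asymptotically autonomous semiflows, exactly as at the close of the proof of Theorem 2.1, yields $\lim_{t\to\infty} S(\cdot, t, \psi) = m^{\ast}$ uniformly in $x$. Combined with the identities $I \equiv 0$, $B \equiv 0$ and the decay $R \to 0$, we conclude $\omega(\psi) = \{(m^{\ast}, 0, 0, 0)\}$.

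I do not anticipate a genuine obstacle here: the bulk of the technical work has already been packaged in Proposition 4.1 and in the asymptotically-autonomous-semiflow apparatus deployed in the proof of Theorem 2.1, and the reduction to a linear two-component system on $M_{\partial}$ is immediate from the definitions. The only place that merits a moment of care is verifying that sup-norm decay of the perturbation $\sigma R$ suffices to trigger the asymptotic autonomy conclusion for $S$ in $C([0,1])$, but this is precisely the regime covered by Corollary 4.3 of \cite{T92} as already used for Theorem 2.1.
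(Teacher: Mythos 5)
Your proposal is correct and follows essentially the same route as the paper: reduce to the $S,R$ subsystem via the definition of $M_{\partial}$, show $R\to 0$, and then treat the $S$-equation as asymptotically autonomous and invoke Proposition 4.1 with $\overline{U}=0$, $g(x)\equiv b$, $\lambda=d$. The only difference is that you supply an explicit exponential decay bound for $R$ where the paper is terse, which is a harmless (and welcome) elaboration.
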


\begin{proof}
We fix $\psi \in M_{\partial}$ so that by definition of $M_{\partial}$, we have $\Phi_{t}(\psi) \in \partial \mathbb{W}_{0}  \hspace{1mm} \forall  \hspace{1mm} t \geq 0$; i.e. 
\begin{equation*}
I(\cdot, t) \equiv 0 \text{ and } B(\cdot, t) \equiv 0 \hspace{3mm} \text{ on } [0,1], \hspace{1mm}  \forall  \hspace{1mm} t \geq 0. 
\end{equation*}
Then $S, R$-equations in (\ref{1}) reduce to 
\begin{align*}
\partial_{t}S =& D \partial_{xx}^{2}S + b - dS + \sigma R,\\
\partial_{t}R =& D \partial_{xx}^{2}R - R (d+\sigma), 
\end{align*}
which leads to $\forall  \hspace{1mm} x \in [0,1]$, 
\begin{equation*}
\lim_{t\to\infty} R(x, t, \psi) = 0. 
\end{equation*}
Hence, the $S$-equation in (\ref{1}) is asymptotic to 
\begin{equation*}
\partial_{t}V_{1} = D \partial_{xx}^{2} V_{1} + b - dV_{1} 
\end{equation*}
and therefore by Proposition 4.1 with $\overline{U} = 0, g(x) \equiv b, \lambda = d$, we obtain 
\begin{equation*}
\lim_{t\to\infty} S(x, t, \psi) = \frac{b}{d} = m^{\ast}  \hspace{1mm} \forall  \hspace{1mm} x \in [0,1]. 
\end{equation*}
\end{proof}

Next, we show that $(m^{\ast}, 0, 0, 0)$ is a weak repeller for $\mathbb{W}_{0}$: 

\begin{proposition}
Suppose $D = D_{1} = D_{2} = D_{3}, \phi \in \mathbb{W}_{0}$ and $g < \delta$. Let $u(x,t,\phi)$ be the unique global nonnegative solution of the system (\ref{1}) subjected to (\ref{2}), (\ref{3}) such that $u(x, 0, \phi) = \phi(x)$ and $\Phi_{t}(\phi) = u(t, \phi)$ be its solution semiflow. If $\mathcal{R}_{0} > 1$, then there exists $\delta_{0} > 0$ such that
\begin{align}\label{33}
\limsup_{t\to \infty} \lVert \Phi_{t}(\phi) - (m^{\ast}, 0, 0, 0)\rVert_{C([0,1])} \geq \delta_{0}. 
\end{align}
\end{proposition}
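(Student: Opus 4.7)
The plan is a proof by contradiction parallel in structure, but reversed in direction of inequalities, to the proof of Theorem 2.1. For the fixed $\phi \in \mathbb{W}_{0}$, suppose toward contradiction that (\ref{33}) fails for every $\delta_{0} > 0$; equivalently, $\limsup_{t\to\infty}\lVert \Phi_{t}(\phi) - (m^{\ast},0,0,0)\rVert_{C([0,1])} = 0$, so for any preassigned $\delta_{0} > 0$ there exists $t_{0} = t_{0}(\phi,\delta_{0}) \geq 0$ such that $\forall \hspace{1mm} t \geq t_{0}, x \in [0,1]$,
\begin{equation*}
S(x,t) \geq m^{\ast} - \delta_{0}, \qquad 0 \leq I(x,t), R(x,t), B(x,t) \leq \delta_{0}.
\end{equation*}

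First I would exploit the hypothesis $\mathcal{R}_{0} > 1$ through the spectral framework of the proof of Theorem 2.1. Since the principal eigenvalue $\lambda(m^{\ast})$ of $\tilde{\Theta}$ in (\ref{15}) has the same sign as $\mathcal{R}_{0} - 1$, we have $\lambda(m^{\ast}) > 0$, and by continuity in $m^{\ast}$ there exists $\epsilon_{0} > 0$ with $\lambda(m^{\ast} - \epsilon_{0}) > 0$; fix this $\epsilon_{0}$. This replaces the step ``$\lambda(m^{\ast} + \epsilon_{0}) < 0$'' used in the proof of Theorem 2.1. Taking $\delta_{0} \in (0, \epsilon_{0}]$ and using $S \geq m^{\ast} - \epsilon_{0}$, $B \leq \delta_{0}$, together with the elementary bounds $\frac{B}{B+K} \geq \frac{B}{K+\delta_{0}}$ and $gB(1 - B/K_{B}) \geq g(1 - \delta_{0}/K_{B})B$, I produce a cooperative (Definition \ref{pg. 56, 129 [S95]}) and irreducible linear lower-bound system: $\forall \hspace{1mm} t \geq t_{0}$,
\begin{align*}
\partial_{t} I &\geq D\partial_{xx}^{2} I + \beta_{1}(m^{\ast} - \epsilon_{0}) I + \frac{\beta_{2}(m^{\ast} - \epsilon_{0})}{K + \delta_{0}} B - I(d+\gamma),\\
\partial_{t} B &\geq D_{4} \partial_{xx}^{2} B - U\partial_{x} B + \xi I + \bigl(g(1 - \delta_{0}/K_{B}) - \delta\bigr) B.
\end{align*}
By Theorem 7.6.1 of \cite{S95}, the corresponding eigenvalue problem has a real principal eigenvalue $\lambda^{\ast}_{\delta_{0}}$ with strictly positive eigenfunction $\psi^{0} = (\psi^{0}_{2}, \psi^{0}_{4})$. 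As $\delta_{0} \to 0$, the coefficient matrix converges to that of (\ref{23}) with $m^{\ast} - \epsilon_{0}$ in place of $m^{\ast}$, whose principal eigenvalue is $\lambda(m^{\ast} - \epsilon_{0}) > 0$; by continuity I may shrink $\delta_{0}$ to ensure $\lambda^{\ast}_{\delta_{0}} > 0$.

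The final step is to apply Proposition 6.1 to deduce that $I(\cdot, t_{1}) > 0$ and $B(\cdot, t_{1}) > 0$ on $[0,1]$ for some $t_{1} \geq t_{0}$: since $\phi \in \mathbb{W}_{0}$, either $\phi_{2} \not\equiv 0$ or $\phi_{4} \not\equiv 0$, and in the case only one coordinate is initially nontrivial, the coupling terms $\beta_{2} SB/(B+K)$ in the $I$-equation and $\xi I$ in the $B$-equation (combined with the strict positivity of $S$ from Proposition 6.1) force the missing coordinate to become strictly positive, after which Proposition 6.1 applies to both. Picking $\eta > 0$ small enough that $(I(\cdot, t_{1}), B(\cdot, t_{1})) \geq \eta \psi^{0}(\cdot)$ pointwise, the comparison principle (Theorem 7.3.4 of \cite{S95}) applied to the cooperative lower-bound system yields $(I(\cdot, t), B(\cdot, t)) \geq \eta e^{\lambda^{\ast}_{\delta_{0}}(t - t_{1})} \psi^{0}$ for all $t \geq t_{1}$. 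Since $\lambda^{\ast}_{\delta_{0}} > 0$, the right-hand side grows unboundedly, contradicting $I, B \leq \delta_{0}$.

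The main technical obstacle is verifying the continuity of the principal eigenvalue $\lambda^{\ast}_{\delta_{0}}$ under the two simultaneous perturbations (replacing $m^{\ast}$ by $m^{\ast} - \epsilon_{0}$, and $K, g$ by their $\delta_{0}$-perturbed values); because the convection term $-U\partial_{x}$ in the $B$-equation makes the operator non-self-adjoint, this must be handled via a perturbation result for principal eigenvalues of non-self-adjoint cooperative elliptic operators rather than a variational characterization. A secondary nuisance is the strict-positivity bootstrap when exactly one of $\phi_{2}, \phi_{4}$ is identically zero, which is handled by a brief reapplication of Proposition 6.1 after the cross-coupling has propagated positivity to the missing coordinate.
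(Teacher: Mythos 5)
Your proposal is correct and follows essentially the same route as the paper's proof: contradiction from \eqref{34}, the cooperative irreducible linear lower-bound system \eqref{37} with principal eigenvalue/eigenfunction from Theorem 7.6.1 of \cite{S95}, positivity of $I$ and $B$ via Proposition 6.1 (including the same cross-coupling bootstrap when only one of $\phi_{2},\phi_{4}$ is nontrivial), and the comparison principle yielding unbounded exponential growth since $\lambda(m^{\ast},\delta_{0})>0$ for small $\delta_{0}$ by continuity of the principal eigenvalue. The only cosmetic difference is your intermediate parameter $\epsilon_{0}$ with $\lambda(m^{\ast}-\epsilon_{0})>0$, where the paper perturbs all coefficients by the single parameter $\delta_{0}$ and passes to the limit $\delta_{0}\to 0$ once; the eigenvalue-continuity point you flag is indeed used, but not elaborated, in the paper.
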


\begin{proof}
By hypothesis $\mathcal{R}_{0} > 1$ so that $\mathcal{R}_{0} - 1  >0$ and as discussed in the proof of Theorem 2.1, we have $\lambda(m^{\ast}) > 0$ where $\lambda(m^{\ast})$ is the principal eigenvalue of $\tilde{\Theta}$ in (\ref{15}). To reach a contradiction, suppose that there exists some $\psi_{0} \in \mathbb{W}_{0}$ such that $\forall  \hspace{1mm} \delta_{0} > 0$ and hence in particular for $\delta_{0} \in (0, m^{\ast})$, 
\begin{equation}\label{34}
\limsup_{t\to\infty} \lVert \Phi_{t}(\psi_{0}) - (m^{\ast}, 0, 0, 0)\rVert_{C([0,1])} < \delta_{0}.
\end{equation}
This implies that there exists $t_{1} > 0$ sufficiently large such that in particular 
\begin{equation*}
m^{\ast} - \delta_{0} < S(x, t), \hspace{3mm} B(x, t) < \delta_{0} \hspace{3mm} \forall \hspace{1mm} t \geq t_{1}, x \in [0,1], 
\end{equation*}as $\Phi_{t}(\psi_{0}) = (S, I, R, B)(t)$. Thus, we see that due to (\ref{1}), 
\begin{align}\label{35}
\partial_{t}I 
\geq& D \partial_{xx}^{2} I + \beta_{1} (m^{\ast} - \delta_{0})I + (m^{\ast} - \delta_{0}) \frac{\beta_{2}}{(\delta_{0} + K)} B - I(d+\gamma),
\end{align}
\begin{align}\label{36}
\partial_{t}B 
\geq& D_{4} \partial_{xx}^{2}B - U \partial_{x}B + \xi I + gB \left(1 - \frac{\delta_{0}}{K_{B}}\right) - \delta B 
\end{align}
$\forall  \hspace{1mm} t \geq t_{1}, x \in [0,1]$. We thus consider for $t \geq t_{1}, x \in [0,1]$, 
\begin{equation}\label{37}
\begin{cases}
\partial_{t}V_{2} = D \partial_{xx}^{2}V_{2} +\beta_{1}(m^{\ast} - \delta_{0}) V_{2} + (m^{\ast} - \delta_{0}) \frac{\beta_{2}}{(\delta_{0} + K)} V_{4} - V_{2}(d+\gamma),\\
\partial_{t}V_{4} = D_{4} \partial_{xx}^{2} V_{4} - U \partial_{x}V_{4} + \xi V_{2} + gV_{4}\left(1 - \frac{\delta_{0}}{K_{B}}\right) - \delta V_{4}.
\end{cases}
\end{equation}
We may write the right hand side as 
\begin{align}\label{38}
\begin{pmatrix}
D \partial_{xx}^{2}V_{2} \\
D_{4} \partial_{xx}^{2}V_{4} - U \partial_{x}V_{4} 
\end{pmatrix}
+ 
M
\begin{pmatrix}
V_{2}\\
V_{4}
\end{pmatrix}
\end{align}
where 
\begin{equation*}
M \triangleq \begin{pmatrix}
\beta_{1}(m^{\ast} - \delta_{0}) - (d+\gamma) & (m^{\ast} - \delta_{0}) \frac{\beta_{2}}{(\delta_{0} + K)}\\
\xi & g(1- \frac{\delta_{0}}{K_{B}}) - \delta 
\end{pmatrix}
\end{equation*}
and therefore, $M_{ij} \geq 0  \hspace{1mm} \forall  \hspace{1mm} i \neq j$ as $\xi, (m^{\ast} - \delta_{0}) \frac{\beta_{2}}{(\delta_{0} + K)} > 0$ because $\delta_{0} < m^{\ast}$ by assumption. This also implies that it is irreducible as in fact $M_{ij} > 0  \hspace{1mm} \forall  \hspace{1mm} i \neq j$ (see Definition \ref{pg. 56, 129 [S95]}). Therefore, by Theorem 7.6.1 \cite{S95}, we may find a real eigenvalue $\lambda(m^{\ast}, \delta_{0})$ and its corresponding positive eigenfunction $\phi_{0}$ so that this system has a solution 
\begin{equation*}
(V_{2}, V_{4})(x,t) = e^{\lambda(m^{\ast}, \delta_{0})(t - t_{1})} \phi_{0}(x)
\end{equation*}
for $x \in [0,1], t \geq t_{1}$.

Now by assumption, $\psi_{0} \in \mathbb{W}_{0}$ and hence $\psi_{2}(\cdot) \not\equiv 0$ or $\psi_{4}(\cdot) \not\equiv 0$. If $\psi_{2}(\cdot) \not\equiv 0$, then by Proposition 6.1, we know that $I(x, t, \psi_{0}) > 0 \hspace{1mm} \forall \hspace{1mm} x \in [0,1], t > 0$. If for any $t_{0}> 0$, $B(\cdot, t_{0}) \equiv 0 \hspace{1mm} \forall \hspace{1mm} x \in [0,1]$, then by (\ref{1}), $0 = \xi I (x, t_{0})$ which is a contradiction because $\xi > 0$. Therefore, $B(\cdot, t_{0}) \not\equiv 0$ and it follows that by Proposition 6.1, $B(x, t, \psi_{0}) > 0 \hspace{1mm} \forall x \in [0,1], t > t_{0}$ and hence $\forall \hspace{1mm} t > 0$ by arbitrariness of $t_{0} > 0$. 

On the other hand, if $\psi_{4}(\cdot) \not\equiv 0$, then by Proposition 6.1, we know that $B(x, t, \psi_{0}) > 0 \hspace{1mm} \forall \hspace{1mm} x \in [0,1], t > 0$. Now if for any $t_{0} > 0$, $I(\cdot, t_{0}) \equiv 0 \hspace{1mm} \forall \hspace{1mm} x \in [0,1]$, then by (\ref{1}), $0 = \beta_{2} S(x, t_{0})\left( \frac{B(x, t_{0})}{B(x, t_{0})+K}\right)$ which is a contradiction because $\beta_{2} > 0$ and $S(x,t) > 0 \hspace{1mm} \forall x \in [0,1], t > 0$ by Proposition 6.1 as $\psi_{0} \in \mathbb{W}_{0} \subset X^{+}$. Therefore, $I(\cdot, t_{0}) \not\equiv 0$ and it follows that by Proposition 6.1, $I(x, t, \psi_{0}) > 0 \hspace{1mm} \forall x \in [0,1], t > t_{0}$ and hence $\forall \hspace{1mm} t > 0$ by arbitrariness of $t_{0} > 0$. Thus, we conclude that $\forall \hspace{1mm} \psi_{0} \in \mathbb{W}_{0}$, $I(x, t, \psi_{0}) > 0, B(x, t, \psi_{0}) > 0 \hspace{1mm} \forall \hspace{1mm} x \in [0,1], t > 0$ and hence in particular $\forall \hspace{1mm} t \geq t_{1}$.

Hence, we may obtain 
\begin{equation}\label{39}
(I(x,t_{1}, \psi_{0}), B(x, t_{1}, \psi_{0})) \geq \eta \phi_{0}(x)
\end{equation}
for $\eta > 0$ sufficiently small. Therefore, by Comparison Principle, specifically Theorem 7.3.4 \cite{S95} with (\ref{9}), 
\begin{align*}
& F_{2}^{-} \triangleq \beta_{1}(m^{\ast} - \delta_{0}) I + (m^{\ast} - \delta_{0}) \frac{\beta_{2}}{(\delta_{0} + K)} B - I(d+\gamma),\\
& F_{4}^{-} \triangleq \xi I + g B\left(1 -\frac{\delta_{0}}{K_{B}} \right)- \delta B,
\end{align*}
so that
\begin{equation*}
\frac{\partial F_{2}^{-}}{\partial B} = (m^{\ast} - \delta_{0}) \frac{\beta_{2}}{(\delta_{0} + K)} \geq 0, \hspace{3mm} \frac{\partial F_{4}^{-}}{\partial I} = \xi > 0,
\end{equation*}
we obtain for $t \geq t_{1}, x \in [0,1]$, 
\begin{equation*}
(I(x, t, \psi_{0}), B(x, t, \psi_{0})) \geq (V_{2}(x, t, \eta \phi_{0}), V_{4}(x, t, \eta \phi_{0})) = \eta e^{\lambda(m^{\ast}, \delta_{0})(t - t_{1})} \phi_{0}(x)
\end{equation*}
due to linearity of (\ref{37}). Now $\lambda(m^{\ast}) > 0$ and in comparison of the second and fourth equations of (\ref{15}) and (\ref{37}), we see that $\lim_{\delta_{0} \to 0} \lambda(m^{\ast}, \delta_{0}) = \lambda (m^{\ast}) > 0$ so that taking $\delta_{0} \in (0, m^{\ast})$ even smaller if necessary, we have $\lambda(m^{\ast}, \delta_{0}) > 0$.  

Thus, we see that $\eta e^{\lambda (m^{\ast}, \delta_{0})(t-t_{1})} \phi_{0}(x) \to \infty$ as $t \to \infty$ because $\phi_{0}(x) \gg 0$ and $\eta > 0$. This implies $(I, B) (x, t, \psi_{0})$ and hence $(S, I, R, B)(x, t, \psi_{0})$ is unbounded, contradicting 
\small 
\begin{align*}
\limsup_{t\to\infty} (\lVert S(t) - m^{\ast} \rVert_{C([0,1])} + \lVert I(t) \rVert_{C([0,1])} + \lVert R(t) \rVert_{C([0,1])} + \lVert B(t) \rVert_{C([0,1])} ) < \delta_{0} 
\end{align*}
\normalsize
by (\ref{6}) and (\ref{34}). 
Therefore, we have shown that for $\delta_{0} \in (0, m^{\ast})$ sufficiently small, (\ref{33}) holds. 
\end{proof}

Now we define a function $p: X^{+} \mapsto \mathbb{R}_{+}$ by 
\begin{equation*}
p(\psi) \triangleq \min \{ \min_{x \in [0,1]} \psi_{2}(x), \min_{x \in [0,1]} \psi_{4}(x)\}
\end{equation*}
It immediately follows that $p^{-1}((0,\infty)) \subset \mathbb{W}_{0}$.

Now suppose $p(\psi) = 0$ and $\psi \in \mathbb{W}_{0}$. The hypothesis that $\psi \in \mathbb{W}_{0}$ implies that 
\begin{equation*}
\psi_{2}(\cdot) \not\equiv 0 \text{ or } \psi_{4}(\cdot) \not\equiv 0. 
\end{equation*}
This deduces that by the argument in the proof of Proposition 6.4, $I(x, t, \psi) > 0$ and $B(x, t, \psi) > 0$ $\forall \hspace{1mm} t > 0, x \in [0,1]$.  Thus, in this case we deduce that 
\begin{equation*}
\min\{ \min_{x \in [0,1]} I(x, t, \psi), \min_{x \in [0,1]} B(x, t, \psi) \} > 0 \hspace{3mm} \forall \hspace{1mm} t> 0
\end{equation*}
which implies that $p(\Phi_{t}(\psi)) > 0 \hspace{1mm} \forall  \hspace{1mm} t > 0$. 

Next, suppose $p(\psi) > 0$ so that $\psi_{2}(\cdot) \not\equiv 0$ and $\psi_{4}(\cdot) \not\equiv 0$. Thus, by Proposition 6.1, this implies $p(\Phi_{t}(\psi)) > 0 \hspace{1mm} \forall  \hspace{1mm} t > 0$. Hence, we have shown that $p$ is a generalized distance function for the semiflow $\Phi_{t}: X^{+} \mapsto X^{+}$.

We already showed that any forward orbit of $\Phi_{t}$ in $M_{\partial}$ converges to $(m^{\ast}, 0, 0, 0)$ due to Proposition 6.3. Thus, as $\Phi_{t}((m^{\ast}, 0, 0, 0)) = (m^{\ast}, 0, 0, 0)$, $ \{(m^{\ast}, 0, 0, 0)\}$ is a nonempty invariant set that is also a maximal invariant set in some neighborhood of itself and hence by Definition \ref{pg. 2, 3, 11 [Z03]}, it is also isolated. Thus,   if we denote the stable set of $(m^{\ast}, 0, 0, 0)$ by $W^{s}((m^{\ast}, 0, 0, 0))$, we see that $W^{s}((m^{\ast}, 0, 0, 0)) \cap \mathbb{W}_{0} = \emptyset$ as $\mathbb{W}_{0} = \{\psi \in X^{+}: \psi_{2}(\cdot) \not\equiv 0 \text{ or } \psi_{4}(\cdot) \not\equiv 0 \}$. Therefore, making use of Propositions 6.2 and 6.3, we may apply Lemma \ref{Lemma 3, [SZ01]} to conclude that there exists $\eta > 0$ that satisfies
\begin{equation*}
\min_{\psi \in \omega(\phi)} p(\psi) > \eta \hspace{3mm} \forall  \hspace{1mm} \phi \in \mathbb{W}_{0};
\end{equation*}
hence, $\forall  \hspace{1mm} i = 2, 4,$ and $\forall  \hspace{1mm} x \in [0,1]$, 
\begin{equation*}
\liminf_{t\to\infty} u_{i}(x,t, \phi) \geq \eta \hspace{3mm} \forall  \hspace{1mm} \phi \in \mathbb{W}_{0}
\end{equation*}
by (\ref{4}). By taking $\eta$ even smaller if necessary to satisfy $\eta \in (0, \frac{b}{\beta_{1} 2m^{\ast} + \beta_{2}  +d})$, we obtain (\ref{9}) using Proposition 6.1.

Finally, we know as shown in the proof of Proposition 6.2, that $\Phi_{t}$ is compact so that it is asymptotically smooth by Lemma \ref{pg. 3, [Z03]}. Moreover, as we already showed that $\Phi_{t}(\mathbb{W}_{0}) \subset \mathbb{W}_{0}$, by Proposition 6.4, we see that $\Phi_{t}$ is $\rho$-uniformly persistent. We also know due to Proposition 6.2 that $\Phi_{t}: X^{+} \mapsto X^{+}$ has a global attractor $A$. Thus, by Lemma \ref{Theorem 3.7, [MZ05]}, Remark \ref{Remark 3.10, [MZ05]}, $\Phi_{t}: \mathbb{W}_{0} \mapsto \mathbb{W}_{0}$ has a global attractor $A_{0}$. 

This implies that because we already showed that $\Phi_{t}(\mathbb{W}_{0}) \subset \mathbb{W}_{0} \hspace{1mm} \forall  \hspace{1mm} t \geq 0$, $\Phi_{t}$ is compact so that it is $\alpha$-condensing by Lemma \ref{pg. 3, [Z03]}, due to Lemma \ref{Theorem 4.7, [MZ05]}, we see that $\Phi_{t}$ has an equilibrium $a_{0} \in A_{0}$. By Proposition 6.1, it is clear that $a_{0}$ is a positive steady state. This completes the proof of Theorem 2.2. 

\section{Conclusion}
In this article, we have studied a general reaction-diffusion-convection cholera model, which formulates bacterial and human diffusion, bacterial convection, intrinsic pathogen growth and direct/indirect transmission routes.  This general formation of the PDE model allows us to give a thorough investigations on the interactions between the spatial movement of human and bacteria, intrinsic pathogen dynamics and multiple transmission pathways and their contribution of the spatial pattern of cholera epidemics.

The main purpose of this work is to investigate the global dynamics of this PDE model (\ref{1}). To achieve this goal, we have established the threshold results of global dynamics of (\ref{1}) using the basic reproduction number $R_0$. Our analysis shows that if $R_0>1$, the disease will persist uniformly; whereas if $R_0<1$, the disease will die out and the DFE is globally attractive when the diffusion rate of susceptible, infectious and recovered human hosts are identical. These results shed light into the complex interactions of cholera epidemics in terms of model parameters, and their impact on extinction and persistence of the disease. In turn, these findings may suggest efficient implications for the prevention and control of the disease. 

Besides, we would like to mention that there are a number of interesting  directions at this point, that haven't been considered in the present work. One direction is to study seasonal and climatic changes. It is well known that these factors can cause fluctuation of disease contact rates, human activity level, pathogen growth and death rates, etc., which in turn have strong impact on disease dynamics.  The other direction is to model spatial heterogeneity. For instance, taking the diffusion and convection coefficients and other model parameters to be space dependent in 2 dimensional spatial domain (instead of constant values in 1 dimensional region) will better reflect the details of spatial variation. These would make for interesting topics in future investigations. 

\section{Appendix}
\subsection{Proof of Lemma \ref{Theorems 2.1, 2.2, [YW16]}}
In this section, we prove Lemma \ref{Theorems 2.1, 2.2, [YW16]} for completeness. The local existence of unique nonnegative mild solution on $[0, \sigma), \sigma = \sigma(\phi)$, as well as the blow up criterion that if $\sigma = \sigma(\phi) < \infty$, then the sup norm of the solution becomes unbounded as $t$ approaches $\sigma$ from below is shown in the Theorem 2.1 \cite{YW16}. To show that $\sigma = \infty$, we assume that $\sigma < \infty$, fix such $\sigma$ and show the uniform bound which contradicts the blow up criterion. Specifically we show that by performing energy estimates more carefully, keeping track of the dependence on each constant, we may extend Proposition 1 of \cite{YW16} to the case $p  =\infty$. For brevity, we write $L^{p}$ to imply $L^{p}([0,1])$ below for $p \in [1, \infty]$. 

\begin{proposition}
If $u(x, t, \phi) = (S, I, R, B)(x, t, \phi)$ solves (\ref{1}) subjected to (\ref{2}), (\ref{3}) in $[0, \sigma)$, then
\begin{equation*}
\sup_{t \in [0,\sigma)} \lVert u(t) \rVert_{L^{\infty}} \leq 3 (\lVert \phi_{1} \rVert_{L^{\infty}} + \lVert \phi_{2} \rVert_{L^{\infty}} + \lVert \phi_{3} \rVert_{L^{\infty}} + b \sigma)(1 + e^{\sigma g} \xi \sigma) + \lVert \phi_{4} \rVert_{L^{\infty}} e^{\sigma g}
\end{equation*}
\end{proposition}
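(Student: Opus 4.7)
The plan is to exploit the fact that summing the first three PDEs in (\ref{Eqn:PDE-model}) decouples $V\triangleq S+I+R$ from $B$, reducing the $L^\infty$ control of $(S,I,R)$ to a single scalar reaction-diffusion equation, and then to propagate a Gronwall-type $L^p$ estimate on $B$ to $p=\infty$.

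First, I would add the equations for $S$, $I$, $R$; the bilinear terms and the $\gamma I$, $\sigma R$ swaps all cancel, leaving
\begin{equation*}
\partial_t V = D\,\partial_{xx}^2 V + b - dV,\qquad \partial_x V\bigr|_{x=0,1}=0,\qquad V(x,0)=\phi_1+\phi_2+\phi_3.
\end{equation*}
The spatially-constant function $\tilde V(t)\triangleq \|\phi_1+\phi_2+\phi_3\|_{L^\infty}+bt$ satisfies $\partial_t\tilde V \ge b-d\tilde V$ and the homogeneous Neumann BC, so the parabolic comparison principle (Theorem 7.3.4 of \cite{S95}) yields $\|V(t)\|_{L^\infty}\le \tilde V(t)\le C_I\triangleq \|\phi_1\|_{L^\infty}+\|\phi_2\|_{L^\infty}+\|\phi_3\|_{L^\infty}+b\sigma$ for every $t\in[0,\sigma)$. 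Since $S,I,R\ge 0$ by Lemma \ref{Theorems 2.1, 2.2, [YW16]}, each of them is individually bounded by $V$ pointwise, giving $\|S(t)\|_{L^\infty}+\|I(t)\|_{L^\infty}+\|R(t)\|_{L^\infty}\le 3C_I$.

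Next, for $B$, I would multiply the fourth equation of (\ref{Eqn:PDE-model}) by $B^{p-1}$, integrate over $[0,1]$, and observe that the dissipative terms $-gB^2/K_B$ and $-\delta B$ contribute nonpositively (since $B\ge 0$). Integrating the diffusion term by parts produces the interior piece $-D_4(p-1)\int_0^1 B^{p-2}(\partial_x B)^2\,dx\le 0$ together with a boundary contribution at $x=0$ equal to $-D_4 B^{p-1}(0)\partial_x B(0) = -U B^p(0)$ (using the Robin BC $D_4\partial_x B = UB$ there) and $0$ at $x=1$; the convection term gives $-\frac{U}{p}(B^p(1)-B^p(0))$. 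Collecting, the total boundary contribution is $-U(1-\tfrac{1}{p})B^p(0)-\tfrac{U}{p}B^p(1)\le 0$, so after Hölder on $\xi\int IB^{p-1}$,
\begin{equation*}
\frac{d}{dt}\|B\|_{L^p}\le \xi\|I\|_{L^p}+g\|B\|_{L^p}.
\end{equation*}
Gronwall then gives $\|B(t)\|_{L^p}\le e^{gt}\|\phi_4\|_{L^p}+\xi\int_0^t e^{g(t-s)}\|I(s)\|_{L^p}\,ds$. Passing $p\to\infty$ (since $[0,1]$ has finite measure) and using $\|I(s)\|_{L^\infty}\le C_I$ yields $\|B(t)\|_{L^\infty}\le e^{g\sigma}\|\phi_4\|_{L^\infty}+\xi C_I\sigma e^{g\sigma}$ on $[0,\sigma)$. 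Combining with the bound on $S+I+R$ and using $C_I\le 3C_I$ in the $B$ bound produces exactly
\begin{equation*}
\|u(t)\|_{L^\infty}\le 3C_I(1+\xi\sigma e^{g\sigma})+\|\phi_4\|_{L^\infty}e^{g\sigma}.
\end{equation*}

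The main obstacle I expect is the bookkeeping on the boundary terms for $B$: both the Robin condition at $x=0$ and the convection create boundary contributions in the $L^p$ identity, and one has to verify they combine with the correct sign so that the estimate closes without a Robin correction term. The key observation that makes this work is that the Robin BC is exactly tuned so that $-D_4B^{p-1}(0)\partial_x B(0)=-UB^p(0)$, and then the convective boundary contribution $\tfrac{U}{p}B^p(0)-\tfrac{U}{p}B^p(1)$ combines with this into something manifestly nonpositive for every $p\ge 1$. A secondary subtlety is that one must first close the $L^\infty$ bound on $I$ (via the $V$-analysis) before invoking the $B$-estimate, since the $B$ inequality depends on $\|I\|_{L^p}$; the order $V\mapsto (S,I,R)\mapsto B$ is essential.
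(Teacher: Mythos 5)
Your proposal is correct and follows the same skeleton as the paper's proof: decouple $V\triangleq S+I+R$ into the scalar equation (\ref{17}), extract the factor $3$ from the nonnegativity of $S,I,R$, and then close the $B$-bound by a Gronwall inequality on $\lVert B\rVert_{L^{p}}$ followed by the limit $p\to\infty$. The two places where you deviate are both harmless and, if anything, cleaner. For $V$, the paper cites the $L^{p}$ estimate $\sup_{t}\lVert V(t)\rVert_{L^{p}}\leq \lVert V_{0}\rVert_{L^{p}}+b\sigma$ from \cite{YW16} and passes to $p\to\infty$ via the elementary $\ell^{p}$-versus-$\ell^{1}$ comparison on three terms (which is where its factor $3$ comes from); you instead compare directly in $L^{\infty}$ with the spatially constant supersolution $\lVert V_{0}\rVert_{L^{\infty}}+bt$, getting the same constant with the factor $3$ arising from $S,I,R\leq V$ pointwise. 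For $B$, the paper quotes the differential inequality $\partial_{t}\lVert B\rVert_{L^{p}}\leq\bigl(\tfrac{U^{2}}{4D_{4}(p-1)}+g\bigr)\lVert B\rVert_{L^{p}}+\xi\lVert I\rVert_{L^{p}}$ from \cite{YW16}, in which the convection term has been absorbed into the diffusion by Young's inequality, and the extra coefficient only disappears in the limit $p\to\infty$; your exact bookkeeping of the boundary terms, using that the Robin condition gives $-D_{4}B^{p-1}(0)\partial_{x}B(0)=-UB^{p}(0)$ which combines with the convective flux $\tfrac{U}{p}\bigl(B^{p}(0)-B^{p}(1)\bigr)$ into a manifestly nonpositive quantity, yields the coefficient $g$ for every finite $p$. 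Both routes land on the identical final constant, so this is a legitimate, slightly more self-contained proof of the same estimate.
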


\begin{proof}
From (\ref{1}), we know from the proof of Proposition 1 \cite{YW16} that defining $V \triangleq S + I + R$, we obtain (\ref{17}). For $p \in [2, \infty)$, it is shown in the proof of Proposition 1 of \cite{YW16} that 
\begin{align*}
\sup_{t\in [0, \sigma)}\lVert V(t) \rVert_{L^{p}} \leq \lVert V_{0} \rVert_{L^{p}} + b \sigma. 
\end{align*}
Now as $S, I, R \geq 0$, 
\begin{align*}
\lVert V \rVert_{L^{p}}^{p} 
\geq& \lVert S \rVert_{L^{p}}^{p} + \lVert I \rVert_{L^{p}}^{p} + \lVert R \rVert_{L^{p}}^{p},\\ 
3 ( \lVert S \rVert_{L^{p}}^{p} + \lVert I \rVert_{L^{p}}^{p} + \lVert R \rVert_{L^{p}}^{p})^{\frac{1}{p}}\geq& \lVert S \rVert_{L^{p}} + \lVert I \rVert_{L^{p}} + \lVert R \rVert_{L^{p}}
\end{align*}	
and hence together, this implies that $\forall  \hspace{1mm} p \in [2, \infty)$ 
\begin{equation*}
\sup_{t \in [0, \sigma)}(\lVert S\rVert_{L^{p}} + \lVert I \rVert_{L^{p}} + \lVert R\rVert_{L^{p}})(t) \leq 3 \sup_{t \in [0, \sigma)} \lVert V(t) \rVert_{L^{p}} \leq 3(\lVert V_{0} \rVert_{L^{p}} + b \sigma ).
\end{equation*}
Taking $p \to \infty$ on the right hand side first and then the left hand side shows that 
\begin{equation}\label{40}
\sup_{t \in [0, \sigma)}( \lVert S \rVert_{L^{\infty}} + \lVert I \rVert_{L^{\infty}} + \lVert R \rVert_{L^{\infty}})(t) \leq 3 (\lVert \phi_{1} \rVert_{L^{\infty}} + \lVert \phi_{2} \rVert_{L^{\infty}} + \lVert \phi_{3} \rVert_{L^{\infty}} + b \sigma)
\end{equation}
due to Minkowski's inequalities and (\ref{2}). Next, a similar procedure shows that, as described in complete in detail in the proof of Proposition 1 of \cite{YW16}, we obtain 
\begin{equation*}
\partial_{t} \lVert B \rVert_{L^{p}} \leq \left( \frac{U^{2}}{4D_{4}(p-1)} + g \right)\lVert B \rVert_{L^{p}} + \xi \lVert I \rVert_{L^{p}}.
\end{equation*}
Thus, Gronwall's inequality type argument shows that via H$\ddot{o}$lder's inequality, 
\begin{align*}
\lVert B(t) \rVert_{L^{p}}
\leq \lVert \phi_{4} \rVert_{L^{\infty}} e^{t\left(\frac{U^{2}}{4D_{4} (p-1)} + g\right)} + \xi \int_{0}^{t} \lVert I(s) \rVert_{L^{\infty}} e^{(t-s)\left(\frac{U^{2}}{4D_{4} (p-1)} + g\right)} ds
\end{align*}	
Now taking $p \to \infty$ on the left hand side and then on the right hand side gives $\forall  \hspace{1mm} t \in [0, \sigma)$ 
\begin{align*}
\lVert B(t) \rVert_{L^{\infty}} \leq \lVert \phi_{4} \rVert_{L^{\infty}} e^{\sigma g} + \xi 3 (\lVert \phi_{1} \rVert_{L^{\infty}} + \lVert \phi_{2} \rVert_{L^{\infty}} + \lVert \phi_{3} \rVert_{L^{\infty}} + b \sigma)e^{\sigma g} \sigma 
\end{align*}	
where we used (\ref{40}). Taking $\sup$ over $t \in [0,\sigma)$ on the left hand side completes the proof. 
\end{proof}
By continuity in space of the local solution in $[0, \sigma)$, the proof of Lemma 3.2 is complete. 

\section{Acknowledgments}
The authors would like to thank anonymous reviewers and the editor for their suggestions that improved this manuscript greatly. This work was partially supported by a grant from the Simons Foundation (\#317047 to Xueying Wang).

\end{document}